\newtheorem{theorem}{Theorem}[section]
\newtheorem{lemma}[theorem]{Lemma}
\newtheorem{proposition}[theorem]{Proposition}
\newtheorem{conjecture}[theorem]{Conjecture}
\newtheorem{corollary}[theorem]{Corollary}
\theoremstyle{definition}
\newtheorem{definition}[theorem]{Definition}
\theoremstyle{remark}
\newtheorem{remark}[theorem]{Remark}
\def\Fq{{\mathbb F}_q}
\def\AA{{\mathbb A}}
\def\FF{{\mathbb F}}
\def\PP{{\mathbb P}}
\def\hpm{\widehat{\mathbb P}^m}
\def\p{\mathfrak{p}}
\newcommand{\codim}{\operatorname{codim}}
\newcommand{\PRM}{\mathrm{PRM}}
\newcommand{\V}{\mathsf{V}}
\newcommand{\Proj}{\mathrm{Proj}}
\newcommand{\Vmd}{\mathscr{V}_{m,d}}
\def\PP{{\mathbb P}}
\def\Z{{\mathbb Z}}
\begin{document}

\title[Systems of Homogeneous Polynomials over Finite Fields]{Number of Solutions of Systems of Homogeneous Polynomial Equations over Finite Fields}
\author{Mrinmoy Datta}
\address{Department of Mathematics,
Indian Institute of Technology Bombay,\newline \indent
Powai, Mumbai 400076, India.}
\curraddr{Department of Applied Mathematics and Computer Science, \newline \indent
Technical University of Denmark, DK 2800, Kgs. Lyngby, Denmark}
\email{mrinmoy.dat@gmail.com}
\thanks{The first named author was partially supported by a doctoral fellowship from the National Board for Higher Mathematics, a division of the Department of Atomic Energy, Govt. of India.}

\author{Sudhir R. Ghorpade}
\address{Department of Mathematics, 
Indian Institute of Technology Bombay,\newline \indent
Powai, Mumbai 400076, India.}
\email{srg@math.iitb.ac.in}
\thanks{The second named author was partially supported by Indo-Russian project INT/RFBR/P-114 from the Department of Science \& Technology, Govt. of India and  IRCC Award grant 12IRAWD009 from IIT Bombay.}

\subjclass[2010]{Primary 14G15, 11T06, 11G25, 14G05 Secondary 51E20, 05B25}

\date{}

\begin{abstract}
We consider the problem of determining  the maximum number of common zeros in a projective space over a finite field for a system of linearly independent multivariate homogeneous polynomials defined over that field. There is an elaborate conjecture of Tsfasman and Boguslavsky that predicts the maximum value when the homogeneous polynomials have the same degree that is not too large in comparison to the size of the finite field. We show that this conjecture holds in the affirmative if the number of polynomials does not exceed the total number of variables.  This extends the results of Serre (1991) and Boguslavsky (1997) for the case of one and two polynomials, respectively. Moreover, it complements our recent result that the conjecture is false, in general, if the number of polynomials exceeds the total number of variables.  
\end{abstract}

\maketitle


\section{Introduction}
\label{sec:in}

Let $r,d,m$ be positive integers and let $\Fq$ denote the finite field with $q$ elements. Also let $S:=\Fq[x_0,x_1, \dots , x_m]$ denote the ring of polynomials in $m+1$ variables with coefficients in $\Fq$ and $\PP^m = \PP^m(\Fq)$ the $m$-dimensional projective space over $\Fq$. We are interested 
in the following question. 

\medskip

{\bf Question:} What is the maximum number of common zeros that a system of $r$ linearly independent homogeneous polynomials of degree $d$ in $S$ 
can have in $\PP^m(\Fq)$?

\medskip

Note that because of the condition  of  linear independence, the question is meaningful when  $r\le M$, where  $M:= \binom{m+d}{d}$. Also note that if $\Vmd$ denotes the Veronese variety given by the image of $\PP^m$ in $\PP^{M-1}$ under the Veronese map of degree $d$, then the question is equivalent to the following:

\medskip

{\bf Question:} What is the maximum number of $\Fq$-rational points that a section of $\Vmd$ by a linear subspace of $\PP^{M-1}$ of codimension $r$ can have?

\medskip
In case $d\ge q+1$,  
it is easy to construct for many values of $r$, systems of  $r$ linearly independent homogeneous polynomials of degree $d$ in $S$ which vanish at every point of $\PP^m(\Fq)$.  (See Remark \ref{qplusone} for details.) 
So for most values of $r$ (and certainly for $r\le m+1$), the answer in the case $d\ge q+1$ is 
$p_m$, where for any $k\in \Z$, we set $p_k := |\PP^k(\Fq)| = q^k + q^{k-1} + \dots + q + 1$ if $k \ge 0$ and $p_k:=0$ if $k < 0$. Thus the question is mainly of interest when $d\le q$, and we will mainly restrict to this case.


A brief history of the above question is as follows. It was first posed by Tsfasman in the late 1980's in the case $r=1$, i.e., for hypersurfaces in $\PP^m$; in fact, Tsfasman conjectured that the maximum value is $dq^{m-1} + p_{m-2}$ when $r=1$ and $d\le q+1$.  This conjecture was proved in the affirmative by Serre \cite{Se} and independently, by S{\o}rensen  \cite{So}  in 1991 (see also \cite{DG}). The next advance came in 1997 when  Boguslavsky \cite{Bog} gave a complete answer in the case $r=2$ and $d< q-1$. Yet another decisive step was taken, albeit in disguise, by Zanella \cite{Z} who solved in 1998 the equivalent question for sections of the Veronese variety given by the quadratic Veronese embedding of $\PP^m$, i.e., in the case $d=2$. 
In \cite{Bog}, Boguslavsky 
also gave a number of conjectures related to the general question, ascribing
some of them to Tsfasman. Surmising from these conjectures and accompanying results, 
one has 
a plausible answer to the above question, at least when $d < q-1$. 

\medskip

{\bf Tsfasman-Boguslavsky Conjecture (TBC)}: Assume that $r \le \binom{m+d}{d}$ and $d < q-1$. Then 
the maximum number of common zeros that a system of $r$ linearly independent homogeneous polynomials of degree $d$ in $S$ 
can have in $\PP^m(\Fq)$ is
\begin{equation}
\label{TBr}
T_r(d,m): = \displaystyle{   p_{m-2j} + \sum_{i=j}^m} \nu_i (p_{m-i} - p_{m-i-j}),
\end{equation}
where $(\nu_1, \dots, \nu_{m+1})$ is the $r$th element in descending lexicographic order among 
$(m+1)$-tuples $(\alpha_1, \dots , \alpha_{m+1})$ of nonnegative integers satisfying 
$\alpha_1+ \cdots + \alpha_{m+1} = d$, and where $j := \min\{i : \nu_i \ne 0\}$. 

The results of Serre \cite{Se} and Boguslavsky \cite{Bog} prove the TBC in the affirmative when $r\le 2$. 
But for $r > 2$ the question remained open for a considerable time. The aim of this paper is to prove that 
the TBC holds in the affirmative for any $r \le m+1$. (See Theorem \ref{mainthm} for a precise statement.)  Our proof uses the result of Serre \cite{Se}, 
but not of Boguslavsky \cite{Bog}. Thus Boguslavsky's theorem becomes a corollary. It should be remarked that an affirmative answer to the TBC in the case 
$r \le m+1$  is perhaps the best one can expect since we 
have shown in \cite{DG} that the TBC is false, in general, if $r > m+1$. However, the question posed at the beginning of the paper is still valid for $r > m+1$, and we propose in Section \ref{MaxFam} a new conjecture for many (but not all) values of $r$ beyond $m+1$. This is partly motivated by an affine analogue of this question and the definitive work on it by Heijnen and Pellikaan \cite{HP}. We also remark that our results on the TBC give bounds on the number of $\Fq$-rational points of projective algebraic varieties in $\PP^m$ defined by $m+1$ or fewer equations of the same degree, and these bounds are easy to use in practice (one just needs to check that the equations are linearly independent) and are also optimal because they are sometimes attained. However, if one has additional (and not-so-easily-checkable) information on the variety such as the dimensions and degrees of  its irreducible components, then there are alternate bounds given recently by Couvreur \cite{C}, and these bounds are sometimes better. 
We refer to \cite[\S 4.2]{DG} for a comparison of our bounds with those of Couvreur. Moreover, if the variety is known to be irreducible (and better still, nonsingular), then there are other general bounds such as those of Lang and Weil, and also those that arise from Weil conjectures. We refer to \cite{GL2} and the references therein for more on these general bounds.

This paper is organized as follows. The next section introduces basic notation and contains a discussion of the initial cases (when $d$, $m$, or $r$ equals $1$) as well as an affine variant of the question posed above, and some useful facts about projective varieties and complete intersections over finite fields. An elementary, but useful, notion of a coprime close family of homogeneous polynomials is introduced in Section \ref{comb}, and a consequence of a combinatorial structure theorem proved in \cite{GL} for close families of sets is obtained here. This section ends with an outline of the strategy of the proof of our main theorem. The key steps are then carried out in Sections~\ref{low} and~\ref{high}. The main theorem is proved in Section \ref{MaxFam}, where we also discuss partial results concerning ``maximal families'' of homogeneous polynomials. 
Further, some related open questions are stated here and a remark mentioning briefly some of the applications of our main theorem is also included.  
%
%

\section{Preliminaries}

In this section we collect some preliminary notions and results, which will be needed later. These include a known answer to the affine analogue of the question posed at the beginning of this paper. As an application, we will settle the case when the polynomials have a linear factor
in common. 

Fix positive integers $r,d,m$ and a finite field $\Fq$ with $q$ elements. 
As in the Introduction, let $S:=\Fq[x_0,x_1, \dots , x_m]$ and for any $j\ge 0$, 
denote by $S_j$ or by $\Fq[x_0,x_1, \dots , x_m]_j$ the space of homogeneous polynomials in $S$ of (total) degree $j$. Note that $S_j$ is a $\Fq$-vector space of dimension $\binom{m+j}{j}$. 
With this in view, we will assume that $r\le \binom{m+d}{d}$. 
The notation $p_k$ (for $k\in \Z$) and $T_r(d.m)$ defined in the Introduction will be used frequently throughout this paper. 

\subsection{Initial Cases}
\label{subsec1}
It is easy to see that the TBC holds in the affirmative if $d=1$ or $m=1$. Indeed, if $d=1$, then by linear algebra, the number of common zeros in $\PP^m(\Fq)$ of  $r$ linearly independent homogeneous linear polynomials in $S$ is $p_{m-r}$, 
and on the other hand, 
$T_r(1,m) = p_{m-2r} + 1 \cdot (  p_{m-r} - p_{m-2r} ) = p_{m-r}$ as well. Likewise, 
if $m=1$,
then $(d-r+1, r-1)$ is the $r^{\rm th}$ ordered pair, in lexicographic descending order, among the pairs of nonnegative integers whose sum is $d$, and thus $T_r(d,1) = p_{-1} + 
(d-r+1)\left(p_{0} - p_{-1}\right) = d-r+1$. Now suppose $d \le q$. To  see that $d-r+1$ is indeed the maximum number of common zeros that 
$r$ linearly independent polynomials in $\Fq[x_0,x_1]_d$, say $F_1, \dots , F_r$,  have, 
one can proceed as follows. If $t$ is the number of common zeros of $F_1, \dots , F_r$, then there is a product, say $G$, of $t$ distinct polynomials in $\Fq[x_0,x_1]_1$ such that $F_i = GG_i$ for some 
$G_i\in \Fq[x_0,x_1]_{d-t}$ ($1\le i \le r$). Since $F_1, \dots , F_r$ are linearly independent, so are 
$G_1, \dots , G_r$, and hence $r\le \dim \Fq[x_0,x_1]_{d-t} = d-t+1$. Thus $t \le d - r + 1$. To see that the upper bound $d-r+1$ is attained, note that any $\mathbf{a} = (a_0:a_1)\in \PP^1(\Fq)$ gives rise to a homogeneous linear polynomial $L_{\mathbf{a}} = a_1 x_0 - a_0 x_1$ with $\mathbf{a}$ as its root, and conversely, any  homogeneous linear polynomial in $\Fq[x_0,x_1]$ has a unique root in $\PP^1(\Fq)$. Let $L_1, \dots , L_{q+1}$ be the homogeneous linear polynomials in $\Fq[x_0,x_1]$ corresponding to the $q+1$ distinct points of $\PP^1(\Fq)$.  For $i=1, \dots , r$, 
consider $F^*_i := L_1\cdots \widehat{L_i} \cdots L_{d+1}$, where $\widehat{L_i}$ indicates that ${L_i}$ is dropped from the product. Clearly, $F^*_1, \dots , F^*_r \in \Fq[x_0,x_1]_d$ and their common zeros are precisely the points of $\PP^1(\Fq)$ corresponding to the $d-r+1$ factors $L_{r+1}, \dots , L_{d+1}$. Moreover, if $F^*_1, \dots , F^*_r$ were linearly dependent, then one of them, say $F^*_i$, would be a $\Fq$-linear combination of others. But then the point of $\PP^1(\Fq)$ corresponding to $L_i$ would be a zero of  $F^*_i$, which is a contradiction. 

With this in view, we shall 
frequently assume that $d>1$ and $m>1$. In this case if 
for $1 \le i \le m+1$, we let $\mathbf{e}_i$ denote the $(m+1)$-tuple with $1$ in $i$th place and $0$ elsewhere, then 
the $r$th element in descending lexicographic order among the exponent vectors of monomials in $m+1$ variables of degree $d$ is precisely
$(d-1)  \mathbf{e}_1+\mathbf{e}_r$, provided $r \le  m+1$. Consequently,
$$
T_r (d,m)   = 
(d-1) q^{m-1} +  p_{m-2} +   q^{m-r}  \, \text{if $r\le m$ and } T_{m+1} (d,m)   = (d-1) q^{m-1} +  p_{m-2};
$$
in other words,
\begin{equation}
\label{expTr}
T_r (d,m)   =  (d-1) q^{m-1} +  p_{m-2} + \lfloor q^{m -r} \rfloor \ \text{if } \;  r \le m+1. 
\end{equation}

To end this subsection, we state for ease of reference the known answer to TBC in a nontrivial initial case of $r=1$. This result is also valid when $d=1$ or $m=1$. 

\begin{theorem}
\label{Serre}
Let $F$ be a nonzero homogeneous polynomial in $S$ of degree $d$ in $m+1$ variables. If $d \le q+1$, then $F$ can have at most $dq^{m-1} + p_{m-2}$ zeros in $\PP^m(\Fq)$. Moreover, if 
$d \le q+1$ and if $F$ has exactly 
$dq^{m-1} + p_{m-2}$ zeros in $\PP^m(\Fq)$, then $F$ is a product of $d$ distinct homogeneous linear polynomials, and the hyperplanes in $\PP^m$ corresponding to these linear factors have a codimension $2$ linear subspace in common. 
\end{theorem}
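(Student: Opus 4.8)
The plan is to argue by induction on $m$, the case $m=1$ being the elementary bound proved in \S\ref{subsec1} (a nonzero binary form of degree $d$ has at most $d=dq^0+p_{-1}$ roots in $\PP^1(\Fq)$, with equality exactly when it splits into $d$ distinct linear forms, the codimension $2$ condition being vacuous). For the inductive step, write $\Zv(F)$ for the zero locus, $N=|\Zv(F)|$, and perform a double count over the $p_m$ hyperplanes of $\PP^m$. Setting $m_H:=|\Zv(F)\cap H|$, the standard incidence counts—each point lies on $p_{m-1}$ hyperplanes and each pair of distinct points lies on $p_{m-2}$ hyperplanes—give the two identities $\sum_H m_H = N\,p_{m-1}$ and $\sum_H m_H(m_H-1)=N(N-1)\,p_{m-2}$.

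Next I would classify the hyperplanes. If $H\not\subseteq \Zv(F)$, then $F|_H$ is a nonzero form of degree $d$ on $H\cong\PP^{m-1}$, so the induction hypothesis yields $m_H\le dq^{m-2}+p_{m-3}=:\beta$; if $H\subseteq \Zv(F)$ then $m_H=p_{m-1}$, and I claim there are at most $d$ such hyperplanes. To see the claim, restrict $F$ to a generic line through a point off $\Zv(F)$: this line is not contained in $\Zv(F)$, meets each of the given hyperplanes in a distinct $\Fq$-point of $\Zv(F)$, and hence $F$ restricted to it (a nonzero binary form of degree $d$) has at least as many roots as there are such hyperplanes, forcing their number to be $\le d$ since $d\le q+1$. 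Feeding $m_H\in\{0,\dots,\beta\}$ on the non-contained hyperplanes and $m_H=p_{m-1}$ on the (at most $d$) contained ones into the two moment identities, and combining them via the convexity inequality $m_H^2\le\beta m_H$ valid on the non-contained hyperplanes, I would simplify using $p_{m-1}=qp_{m-2}+1$ to extract $N\le dq^{m-1}+p_{m-2}$.

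I expect the crux to be precisely this last estimate. The naive first-moment count $N\,p_{m-1}\le a\,p_{m-1}+(p_m-a)\beta$ (with $a\le d$ the number of contained hyperplanes) is too weak: for $d\le q$ it does not return the desired bound, so the second moment $\sum_H m_H^2$ must be brought in and the two identities balanced so that the combined inequality becomes an equality exactly at the extremal configuration. Tracking this equality is what drives the structure statement: it forces the number of hyperplanes inside $\Zv(F)$ to be exactly $d$, whence (again by restriction to a line together with the degree count) $F$ is a scalar multiple of a product $L_1\cdots L_d$ of $d$ distinct linear forms and $\Zv(F)=H_1\cup\cdots\cup H_d$ for the corresponding hyperplanes.

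Finally I would pin down the geometry of these $d$ hyperplanes by a direct incremental count: ordering them and writing $|H_1\cup\cdots\cup H_d|=|H_1|+\sum_{i\ge 2}|H_i\setminus(H_1\cup\cdots\cup H_{i-1})|$, each new hyperplane contributes at most $p_{m-1}-p_{m-2}=q^{m-1}$ points, since $H_i\cap H_1$ already accounts for at least $p_{m-2}$ of them. This gives $N\le p_{m-1}+(d-1)q^{m-1}=dq^{m-1}+p_{m-2}$, with equality if and only if every pairwise intersection $H_i\cap H_j$ coincides with the single codimension $2$ linear subspace $H_1\cap H_2$, that is, the $d$ hyperplanes share a common codimension $2$ subspace, as required. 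The principal obstacle throughout is the sharpness of the counting in the inductive step—ensuring the combined moment inequality is tight exactly at pencils of hyperplanes—together with the careful propagation of the equality conditions through the induction.
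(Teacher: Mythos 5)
First, a point of order: the paper does not actually prove Theorem \ref{Serre} --- its ``proof'' is a citation to Serre \cite{Se}, S{\o}rensen \cite{So} and \cite{DG} --- so your proposal has to be judged on its own merits and against the classical arguments.

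Your skeleton (induction on $m$, the two incidence identities, splitting hyperplanes according to whether they lie inside $\Zv(F)$, and the final incremental count for a union of hyperplanes) is viable when $d\le q$, but there is a genuine gap exactly at the step you call the crux, and your prediction of how it closes is wrong. Write $a$ for the number of hyperplanes contained in $\Zv(F)$, $\beta:=dq^{m-2}+p_{m-3}$, $N_0:=dq^{m-1}+p_{m-2}$. Your two identities plus the convexity step give
\[
N(N-1)\,p_{m-2}\;\le\;(\beta-1)N\,p_{m-1}+a\,p_{m-1}\bigl(p_{m-1}-\beta\bigr).
\]
At the extremal configuration ($a=d$ hyperplanes through a common codimension $2$ subspace $W$, $N=N_0$) this inequality is \emph{strict}, not tight: the $q+1-d$ hyperplanes containing $W$ that are not components meet $\Zv(F)$ in only $p_{m-2}<\beta$ points, and the slack is exactly $(d-1)(q-d+1)q^{m-2}p_{m-2}>0$. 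Equivalently, the quadratic $\phi_a(N)$ obtained by moving everything to the left satisfies $\phi_d(N_0)<0$, so with $a=d$ the inequality does not exclude real values slightly above $N_0$, and no structure can be read off from ``tightness at the extremal configuration''. The approach can be repaired, but with inverted logic: (i) one checks $\phi_d(N_0+1)>0$ (and that the vertex lies below $N_0$), so integrality of $N$ gives $N\le N_0$ when $a=d$; (ii) if $a\le d-1$ then $\phi_a(N_0)=(q-d+1)q^{m-2}\bigl[(d-a)p_{m-1}-(d-1)p_{m-2}\bigr]>0$, whence $N<N_0$; therefore $N=N_0$ forces $a=d$, each contained hyperplane then yields a linear factor (this uses $d\le q$), so $F$ is a product of $d$ distinct linear forms and your incremental count finishes. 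These computations are the actual content of the proof and are absent from, and partly contradicted by, your outline.

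The second gap is the range $d=q+1$, which the statement includes. Your ``at most $d$ contained hyperplanes'' argument needs a point off $\Zv(F)$, and the passage from contained hyperplanes to linear factors needs $d\le q$; both fail when $d=q+1$, since nonzero forms can then vanish on all of $\PP^m(\Fq)$ (the Fermat polynomials $x_i^qx_j-x_j^qx_i$). In fact no argument can close this case for $m\ge 3$: the form $F=x_0^qx_1-x_1^qx_0+x_2^qx_3-x_3^qx_2$ vanishes at every point of $\PP^3(\Fq)$, hence has exactly $(q+1)q^2+p_1=p_3$ zeros, attaining the bound, yet its partial derivatives $(-x_1^q,\,x_0^q,\,-x_3^q,\,x_2^q)$ have no common projective zero, so it is smooth and irreducible over $\overline{\FF}_q$ and certainly not a product of linear forms; the equality analysis is only correct for $d\le q$. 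Finally, for comparison: Serre's own proof never needs the second moment. If some hyperplane lies in $\Zv(F)$ (and $d\le q$), he factors out the corresponding linear form and bounds the affine zeros of the cofactor by $(d-1)q^{m-1}$; if none does, the first moment alone, $N\,p_{m-1}\le p_m\beta$, already gives $N<N_0$ by integrality. The delicate second-moment balancing in your plan is the price paid for refusing to factor, and it is precisely the part you left unverified.
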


\begin{proof} 
For a proof of the first assertion, see Serre \cite{Se} or S{\o}rensen  \cite[Thm.~1]{So}  or  
\cite[Thm.~2.2]{DG}.
The second assertion is proved in  \cite{Se}. 
\end{proof}

\subsection{Projective varieties and Complete intersections} In this paper, by a projective variety we shall mean a projective algebraic set defined over $\Fq$. 
Thus varieties are not assumed irreducible, but if they happen to be irreducible, it will be stated explicitly. 
If $\mathcal{F}$ is a 
set of homogeneous polynomials in $S=\Fq[x_0,x_1, \dots , x_m]$, then we denote by $\V(\mathcal{F})$ the projective variety consisting of the common zeros in $\PP^m(\Fq)$ of polynomials in $\mathcal{F}$. 
If 
$\mathcal{F} = \{F_1, \dots , F_s\}$, we often write $\V(F_1, \dots, F_s)$ for $\V(\mathcal{F})$. A little more formally, if 
$\langle \mathcal{F} \rangle$ is the (homogeneous) ideal of $S$ generated by $\mathcal{F}$, then 
$\V(\mathcal{F})$ corresponds to the closed subscheme $\Proj(S/\langle \mathcal{F} \rangle)$ of 
$\PP^m = \Proj (S)$.

If $X$ is a projective variety (defined over $\Fq$), 
we denote by $\overline{X}$ the corresponding projective variety over the algebraic closure of $\Fq$. Given a projective variety $X$ in $\PP^m(\Fq)$, the notions of \emph{dimension} and \emph{degree} of $X$, denoted $\dim X$ and $\deg X$ respectively,  are understood in scheme-theoretic sense. These remain unchanged under a base change and could also be defined in terms of $\overline{X}$. If $X= V(F_1, \dots, F_s)$ for some homogeneous $F_1, \dots, F_s \in S$ and 
$\codim X := m - \dim X = s$, then $X$ is said to be a \emph{(scheme-theoretic) complete intersection}  in $\PP^m$; in this case the degrees $d_i = \deg F_i$, $i=1, \dots , s$, depend only on $X\hookrightarrow \PP^m$ and, moreover, we have $\deg X = d_1\cdots d_s$. Complete intersections of codimension $1$ in $\PP^m$ are precisely hypersurfaces, i.e., subvarieties of the form $\V(F)$ for some  homogeneous $F\in S$ of positive degree. 
The following simple observation will be useful to construct 
complete intersections other than hypersurfaces. 

\begin{lemma}
\label{complete}
Let $F_1, F_2$ be nonconstant homogeneous polynomials in $S$ having no nonconstant common factor. 
Then $\V(F_1, F_2)$ is a complete intersection of codimension $2$ in $\PP^m(\Fq)$ and, moreover, the degree of $\V(F_1, F_2)$ is $(\deg F_1)( \deg F_2)$. 
\end{lemma}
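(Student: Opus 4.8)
The degree assertion is immediate once the complete intersection property is established, since the degree formula $\deg X = d_1 d_2$ for a codimension-$2$ complete intersection was recalled above; so the real content is to show that $\codim \V(F_1, F_2) = 2$, equivalently $\dim \V(F_1, F_2) = m - 2$. As dimension and codimension are unchanged under base change, the plan is to argue over the algebraic closure $\overline{\Fq}$, using the corresponding variety $\overline{\V(F_1, F_2)}$. The first step is to observe that $F_1$ and $F_2$ remain coprime in $\overline{\Fq}[x_0, \dots, x_m]$: having no nonconstant common factor is preserved under extension of the base field. This is the one input that genuinely uses the field-theoretic hypothesis, and I would justify it by the standard fact that coprimality in a polynomial ring over a field persists over any field extension.

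For the upper bound $\dim \V(F_1, F_2) \le m - 2$, I would use unique factorization in $S$. Over $\overline{\Fq}$, write $\V(F_1) = H_1 \cup \dots \cup H_k$, where each $H_i = \V(P_i)$ is the irreducible hypersurface of dimension $m - 1$ cut out by a distinct irreducible factor $P_i$ of $F_1$. Then $\V(F_1, F_2) = \bigcup_{i=1}^k \left( H_i \cap \V(F_2) \right)$. Coprimality guarantees that no $P_i$ divides $F_2$; since $(P_i)$ is prime and hence radical, this says exactly that $F_2$ does not vanish identically on $H_i$. Therefore $H_i \cap \V(F_2)$ is a proper closed subvariety of the irreducible variety $H_i$, so its dimension is strictly less than $\dim H_i = m - 1$, i.e.\ at most $m - 2$. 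Taking the maximum over $i$ gives $\dim \V(F_1, F_2) \le m - 2$.

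For the reverse inequality I would invoke Krull's principal ideal theorem: since $\V(F_1, F_2)$ is defined by two equations in $\PP^m$, each of its irreducible components has codimension at most $2$, hence dimension at least $m - 2$; and for $m \ge 2$ the intersection is nonempty because any two hypersurfaces in $\PP^m_{\overline{\Fq}}$ meet. Combining the two bounds yields $\dim \V(F_1, F_2) = m - 2$, so $\codim \V(F_1, F_2) = 2$ and $\V(F_1, F_2)$ is a complete intersection, whence $\deg \V(F_1, F_2) = (\deg F_1)(\deg F_2)$.

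The main obstacle is conceptual rather than computational: it is the passage from the hypothesis, which concerns factorization over $\Fq$, to the geometric statement, which is about irreducible components over $\overline{\Fq}$. Once coprimality is known to persist over the closure, everything reduces to unique factorization (to read off the components of the hypersurface $\V(F_1)$) together with two routine dimension-theoretic facts: that a proper closed subset of an irreducible variety has strictly smaller dimension, and that imposing one additional equation lowers codimension by at most one. I note in passing that the entire lemma can be phrased algebraically: $F_1, F_2$ is a regular sequence in the UFD $S$ precisely when they are coprime (the associated primes of $(F_1)$ are the principal primes generated by the irreducible factors of $F_1$, and $F_2$ avoids all of them if and only if it shares no factor with $F_1$), and a homogeneous regular sequence of length $2$ cuts out a complete intersection of codimension $2$.
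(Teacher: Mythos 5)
Your proof is correct, but it follows a genuinely different route from the paper's. The paper never leaves $\Fq$ and never mentions points: it takes a minimal prime $\p$ of $\langle F_1, F_2\rangle$ in $S$, gets $\mathrm{ht}\,\p \le 2$ from Krull's principal ideal theorem, and excludes $\mathrm{ht}\,\p < 2$ by the UFD property of $S$ --- a height-one prime would be principal, say $\langle F\rangle$, and $\langle F_1, F_2\rangle \subseteq \langle F\rangle$ would make $F$ a nonconstant common factor. That single argument yields the upper and lower bounds on dimension simultaneously, shows every component has codimension exactly $2$ (the equidimensionality invoked later when Theorem \ref{cardinality} is applied to such varieties), and uses neither base change nor the Nullstellensatz. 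You prove the lower bound the same way (principal ideal theorem), but the upper bound geometrically over $\overline{\Fq}$, decomposing $\V(F_1)$ into irreducible hypersurfaces and noting that $F_2$ vanishes identically on none of them. This works, but it costs two inputs the paper's argument avoids: persistence of coprimality under the extension $\Fq \subseteq \overline{\Fq}$ (true and standard, but a lemma in its own right --- e.g.\ via flatness of the base change together with $\langle F_1\rangle \cap \langle F_2\rangle = \langle F_1F_2\rangle$), and the projective Nullstellensatz to convert $P_i \nmid F_2$ into ``$F_2$ does not vanish on $\V(P_i)$''. Two smaller remarks: your nonemptiness claim holds only for $m \ge 2$, so the degenerate case $m=1$ (where $\V(F_1,F_2)$ is empty and the conclusion holds only via the convention $\dim \emptyset = -1$) should be flagged, whereas the prime-ideal formulation treats all $m$ uniformly; and your closing observation about regular sequences and the associated primes of $\langle F_1\rangle$ is, in essence, exactly the paper's proof --- developing that remark instead of the geometric argument would have reproduced the shorter, purely algebraic route.
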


\begin{proof}
If $\p$ is a minimal prime ideal of the ideal $\langle F_1, F_2\rangle$ of $S$ generated by $F_1, F_2$, then by Krull's principal ideal theorem, the height of $\p$ is $\le 2$. If 
it were $<2$, then $\p$, being a height $1$ prime ideal in a UFD, would be principal, say $\langle F \rangle$, for some nonconstant $F\in S$. But then 
$\langle F_1, F_2\rangle \subseteq \p = \langle F\rangle$ implies $F$ divides $F_1$ and $F_2$, which  
is a contradiction. 
It follows that $\dim \V(F_1, F_2) = m-2$, as desired. The assertion about 
$\deg \V(F_1, F_2)$ follows from general facts about complete intersections. 
\end{proof}

The following basic bound for the number of $\Fq$-rational points of a projective variety 
over $\Fq$ is due to Lachaud, and a proof can be found in \cite[Prop. 12.1]{GL2}, except that the hypothesis of equidimensionality must be added. For alternative proofs one may refer to \cite[Thm. 2.1]{LR} or \cite[Prop. 2.3]{DG2}.

\begin {theorem}\label{cardinality}
Let $X \subset \mathbb P^m $ be an equidimensional projective variety defined over $\Fq$ of degree $\delta$ and dimension $n$. Then
$$|X (\Fq)|\leq \delta p_n.$$
\end {theorem}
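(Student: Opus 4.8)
The plan is to argue by induction on the dimension $n=\dim X$, using hyperplane sections to drop the dimension and a double-counting argument over the $\Fq$-rational hyperplanes of $\PP^m$. The base case $n=0$ is immediate: an equidimensional variety of dimension $0$ and degree $\delta$ consists of at most $\delta$ points, and since $p_0=1$ this gives $|X(\Fq)|\le \delta = \delta p_0$. For $n\ge 1$ I first carry out two harmless reductions. Because $|X(\Fq)|$ depends only on $X_{\mathrm{red}}$ and passing to the reduced structure cannot increase the degree, and because the bound is additive over irreducible components (if $X=X_1\cup\cdots\cup X_s$ with each $X_i$ irreducible of dimension $n$ and degree $\delta_i$, then $\sum_i\delta_i\le\delta$ and $|X(\Fq)|\le\sum_i|X_i(\Fq)|$), it suffices to treat the case of irreducible $X$. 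Next, replacing $\PP^m$ by the linear span of $X$ alters neither $X(\Fq)$, nor $\delta$, nor $n$, nor the target bound $\delta p_n$; so I may assume in addition that $X$ is contained in no hyperplane.

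With these reductions, I would count incidences between the points of $X(\Fq)$ and the $\Fq$-rational hyperplanes $H$ of $\PP^m$. Each point of $\PP^m(\Fq)$ lies on exactly $p_{m-1}$ rational hyperplanes, while there are $p_m$ of them in all, so $\sum_H |X(\Fq)\cap H| = p_{m-1}\,|X(\Fq)|$. Since $X$ spans $\PP^m$, no $H$ contains $X$, and hence each $X\cap H$ is a proper closed subvariety of the irreducible variety $X$. By Krull's principal ideal theorem every component of $X\cap H$ has dimension $\ge n-1$, while properness inside the irreducible $X$ forces every component to have dimension $\le n-1$; thus $X\cap H$ is equidimensional of dimension $n-1$, and a proper hyperplane section does not increase the degree, so $\deg(X\cap H)\le\delta$. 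The induction hypothesis then yields $|X(\Fq)\cap H|\le \delta p_{n-1}$ for every $H$.

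Combining the two displays gives $p_{m-1}\,|X(\Fq)|\le p_m\,\delta p_{n-1}$. Using $p_m = q\,p_{m-1}+1$ and $p_{n-1}\le p_{m-1}$ (valid since $n\le m$), this simplifies to $|X(\Fq)|\le \delta\bigl(q\,p_{n-1}+ p_{n-1}/p_{m-1}\bigr)\le \delta\bigl(q\,p_{n-1}+1\bigr)=\delta p_n$, which is the claimed bound.

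The step I expect to be the main obstacle is verifying that the hyperplane section $X\cap H$ is a legitimate input for the induction, namely that it is equidimensional of dimension exactly $n-1$ and of degree at most $\delta$. This is precisely where the two reductions earn their keep: reducing to irreducible $X$ and arranging that $X$ spans $\PP^m$ guarantees that no hyperplane swallows a component, so that Krull's principal ideal theorem pins down the dimension of every component of $X\cap H$ uniformly and the standard behaviour of degree under a proper hyperplane section supplies the degree estimate. Once this is in place the concluding numerical inequality is routine, resting only on $n\le m$.
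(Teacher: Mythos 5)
Your argument is correct, but it cannot be ``essentially the same as the paper's,'' because the paper offers no proof of Theorem \ref{cardinality} at all: the result is quoted as due to Lachaud, with proofs delegated to \cite[Prop. 12.1]{GL2} (where, as the authors note, equidimensionality must be added to the hypothesis), \cite[Thm. 2.1]{LR}, and \cite[Prop. 2.3]{DG2}. What you supply is a self-contained, elementary substitute, close in spirit to the Lachaud--Rolland approach: induct on $n$, reduce to $X$ irreducible and nondegenerate, and double-count incidences between $X(\Fq)$ and the $p_m$ rational hyperplanes, each rational point lying on exactly $p_{m-1}$ of them. All the facts you invoke are standard and correctly deployed: additivity of degree over the top-dimensional components (this is exactly where equidimensionality enters --- precisely the hypothesis the paper says is missing from the statement in \cite{GL2}); Krull's principal ideal theorem plus irreducibility, which make every rational hyperplane section $X\cap H$ equidimensional of dimension $n-1$ (or empty, a trivial case you could mention in passing); the fact that such a section has degree at most $\delta$ (via the Hilbert polynomial of the scheme-theoretic section, the defining linear form being a nonzerodivisor because $X$ is irreducible and $H\not\supset X$); and the closing computation $p_{m-1}\,|X(\Fq)|\le p_m\,\delta\,p_{n-1}$, which with $p_m=q\,p_{m-1}+1$ and $p_{n-1}\le p_{m-1}$ does yield $\delta p_n$. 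One point worth making explicit: ``irreducible'' here need only mean $\Fq$-irreducible, not geometrically irreducible, and that weaker property is all your argument uses, since a proper closed subvariety of an $\Fq$-irreducible variety still has strictly smaller dimension. The trade-off between the two treatments is the obvious one: the paper stays short by citing the literature, while your argument would make the paper self-contained at the cost of a page of standard projective geometry.
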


In this paper, we will apply Theorem \ref{cardinality} to complete intersections such as those in Lemma \ref{complete}, and we will tacitly use here the well-known fact that complete intersections are equidimensional. In fact, in the case of varieties such as $\V(F_1, F_2)$ as in Lemma \ref{complete}, the proof shows that every minimal prime of $\langle F_1, F_2\rangle$ has height~$2$ and hence every irreducible component of $\V(F_1, F_2)$ has dimension $m-2$. 

\subsection{Affine case}
As remarked in the Introduction, the affine analogue of the TBC has been settled by Heijnen and Pellikaan \cite{HP} working in the context of Reed-Muller codes. Their result will be needed in this paper, and we 
state it below. 
A self-contained account of 
its proof can also be found in \cite[Appendix A]{DT}. 
\begin{theorem} 
\label{HP}
Assume that $1 \le d< q$. 
Then the maximum number of 
zeros in $\AA^m(\Fq)$ 
of a system of $r$ linearly independent  polynomials in $\Fq[x_1, \dots , x_m]$ of degree at most $d$ 
is 
\begin{equation}
\label{Hrdm}
H_r(d, m) := q^m - \left(\displaystyle{1+\sum_{j= 1}^{m}} \alpha_{j} q ^{m-j} \right),
\end{equation}
where $(\alpha_1, \dots , \alpha_m)$ is the $r^{\rm th}$ tuple in the set 
$\Lambda(d,m)$ of 
$m$-tuples $(\beta_1, \dots , \beta_m)$ with coordinates from $\{0,1, \dots , q-1\}$ satisfying $\beta_1+ \dots + \beta_m \ge m(q-1)-d$,
 and where the $m$-tuples are arranged lexicographically in ascending order. In particular, if $r\le m+1$, 
then this maximum number is $(d-1)q^{m-1} + \lfloor q^{m-r} \rfloor$. 
\end{theorem}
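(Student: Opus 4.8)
The plan is to translate the statement into the language of generalized Hamming weights of (affine) Reed--Muller codes and then to solve the resulting extremal problem by a footprint (standard monomial) bound together with a combinatorial optimization over monomials. First I would observe that a point $P\in\AA^m(\Fq)$ is \emph{not} a common zero of $f_1,\dots,f_r$ exactly when some polynomial in the $r$-dimensional space $D=\langle f_1,\dots,f_r\rangle$ is nonzero at $P$. Hence the number of common zeros equals $q^m-|\Sigma_D|$, where $\Sigma_D$ is the support of $D$ under evaluation on $\AA^m(\Fq)$, and maximizing the number of zeros is the same as minimizing $|\Sigma_D|$ over $r$-dimensional subspaces $D$ of the code $\RM_q(d,m)$ obtained by evaluating polynomials of degree $\le d$. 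Thus the theorem is equivalent to computing the $r$th generalized Hamming weight of this code and checking that it equals $1+\sum_{j}\alpha_j q^{m-j}$.

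For the upper bound I would use the footprint. Writing $I=(f_1,\dots,f_r)$ and $\mathfrak a=(x_1^q-x_1,\dots,x_m^q-x_m)$, the number of common zeros equals $\dim_{\Fq}\Fq[x_1,\dots,x_m]/(I+\mathfrak a)$. Fix a graded monomial order. After Gaussian elimination among $f_1,\dots,f_r$ one obtains $r$ polynomials in $I$ with \emph{distinct} leading monomials $\mu_1,\dots,\mu_r$, each of degree $\le d$ (so each exponent is $<q$). Since the leading-term ideal of $I+\mathfrak a$ contains $(\mu_1,\dots,\mu_r,x_1^q,\dots,x_m^q)$, the footprint bound shows that the number of zeros is at most $q^m-s(\mu_1,\dots,\mu_r)$, where $s(\mu_1,\dots,\mu_r)$ is the number of monomials in the box $\{0,\dots,q-1\}^m$ divisible by some $\mu_i$. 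The upper bound therefore reduces to showing that the minimum of $s$ over all choices of $r$ distinct monomials of degree $\le d$ equals $1+\sum_j\alpha_j q^{m-j}$.

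This last minimization is the heart of the matter, and I expect it to be the main obstacle. The key reindexing is $a\mapsto\beta=(q-1,\dots,q-1)-a$, a bijection from exponent vectors of degree $\le d$ onto $\Lambda(d,m)$ that reverses lexicographic order; under it the number of box-multiples of a single $x^a$ is $\prod_j(q-a_j)=\prod_j(\beta_j+1)$. I would prove that $s$ is minimized by taking the first $r$ monomials in descending lexicographic order, and that this minimum equals $1+\sum_j\alpha_j q^{m-j}$, where $\alpha$ is the $r$th element of $\Lambda(d,m)$ in ascending lexicographic order. This is a Kruskal--Katona-type statement about minimizing the size of the up-set generated by $r$ monomials inside the box, and I would attack it by a compression/shifting argument combined with induction on $m$ (slicing the box along the last coordinate) and on $r$.

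For the matching lower bound I would realize the optimal configuration by explicit polynomials. Fixing an enumeration $\gamma_1,\dots,\gamma_q$ of $\Fq$ and the first $r$ exponent vectors $a^{(1)},\dots,a^{(r)}$ in descending lexicographic order, set $g_i=\prod_{j=1}^m\prod_{k=1}^{a^{(i)}_j}(x_j-\gamma_k)$. Then $\deg g_i=|a^{(i)}|\le d$, the top-degree part of $g_i$ is $x^{a^{(i)}}$, so the $g_i$ have distinct leading monomials and are linearly independent; moreover the set on which $g_i$ is nonzero corresponds bijectively, through the common nesting of the $\gamma_k$, to the box-multiples of $x^{a^{(i)}}$. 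Hence the support of $\langle g_1,\dots,g_r\rangle$ has size exactly $s=1+\sum_j\alpha_j q^{m-j}$, and these polynomials attain $H_r(d,m)$ common zeros. Finally the ``in particular'' clause follows by listing the first $m+1$ elements of $\Lambda(d,m)$: the $r$th monomial is $x_1^d$ for $r=1$, is $x_1^{d-1}x_r$ for $2\le r\le m$, and is $x_1^{d-1}$ for $r=m+1$; substituting these into $1+\sum_j\alpha_j q^{m-j}$ and simplifying yields $(d-1)q^{m-1}+\lfloor q^{m-r}\rfloor$ directly.
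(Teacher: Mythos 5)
The paper's own proof of this theorem is essentially a citation: the first assertion is a restatement of \cite[Thm.~5.10]{HP}, and the only argument actually supplied is the enumeration of the first $m+1$ elements of $\Lambda(d,m)$ in ascending lexicographic order (the least element is $(q-1-d,q-1,\dots,q-1)$; for $1<r\le m$ the $r$-th element changes the first coordinate to $q-d$ and the $r$-th to $q-2$; the $(m+1)$-th is $(q-d,q-1,\dots,q-1)$), followed by the simplification to $(d-1)q^{m-1}+\lfloor q^{m-r}\rfloor$. You instead set out to prove the Heijnen--Pellikaan theorem itself, and most of your skeleton is correct: the identification of the maximum number of common zeros with $q^m$ minus the $r$-th generalized Hamming weight of $\RM_q(d,m)$ (injectivity of evaluation uses $d<q$); the footprint bound after Gaussian elimination, which yields $r$ distinct leading monomials of degree $\le d$ and hence the bound $q^m-s(\mu_1,\dots,\mu_r)$; the complementation bijection $a\mapsto(q-1,\dots,q-1)-a$ onto $\Lambda(d,m)$; the explicit polynomials $g_i$, whose span has support equal to the union of the up-sets of the chosen monomials (since a point lies in the support of the span iff some $g_i$ is nonzero there); and your closed-form evaluation for $r\le m+1$, which agrees with the paper's computation.

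However, there is a genuine gap at exactly the point you yourself flag as the main obstacle: the claim that, over all $r$-element sets of distinct monomials of degree $\le d$, the up-set size $s$ is minimized by the first $r$ monomials in descending lexicographic order, with minimum value $1+\sum_j\alpha_j q^{m-j}$. This Kruskal--Katona-type statement is not a deferrable technicality; in dual form it \emph{is} the content of \cite[Thm.~5.10]{HP}, whose proof occupies the bulk of that paper, and neither the compression/shifting step nor the double induction is carried out in your proposal. Such claims are also delicate: in closely related shadow-minimization problems the extremal families are colexicographic rather than lexicographic segments, so the assertion that lexicographic initial segments win here genuinely requires proof. As written, your argument correctly reduces the theorem to this extremal lemma and verifies everything surrounding it, but to become a proof it must either establish the lemma (a self-contained treatment along precisely these lines is in \cite[Appendix A]{DT}) or, as the paper does, simply invoke \cite{HP}.
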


\begin{proof}
The first assertion is a restatement of \cite[Thm. 5.10]{HP}. To see the last assertion, 
note that 
$\alpha^*:=(q-1-d, \, q-1, \dots , \, q-1)$ is the least element of $\Lambda(d,m)$ and for $1< r \le m$, the $r^{\rm th}$ element is obtained from $\alpha^*$ by changing the first coordinate  to $q-d$ and the $r^{\rm th}$ coordinate 
to $q-2$, whereas the $(m+1)^{\rm th}$ element is 
$(q-d, \, q-1, \dots, \, q-1)$; consequently, $H_r(d,m)$ simplifies to 
$(d-1)q^{m-1} + q^{m-r}$ if $1\le r\le m $ and to $(d-1)q^{m-1}$ if $r=m+1$. 
\end{proof}

As an application of the above result, we show how the Tsfasman-Boguslavsky bound $T_r(d,m)$ can be readily obtained for intersections of hypersurfaces in $\PP^m$ of degree $d$ having a hyperplane in common. 

\begin{lemma}
\label{ded}
Assume that $r \le m+1$ and $1< d \le q$. 
Let $F_1, \dots, F_r$ be linearly independent homogeneous polynomials in $S_d$ having a common 
linear factor. Then 
\begin{equation}
\label{DesiredBound}
| \V(F_1, \dots, F_r)| \le (d - 1) q^{m-1} + p_{m-2} + \lfloor q^{m-r} \rfloor .
\end{equation}
\end{lemma}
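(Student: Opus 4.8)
The plan is to reduce this projective counting problem to the affine one resolved by Heijnen and Pellikaan (Theorem~\ref{HP}), using the common linear factor to strip off a hyperplane. First I would normalize the common factor. Since the number of $\Fq$-rational points of a projective variety is invariant under an invertible $\Fq$-linear change of coordinates, I may assume, after applying such a change, that the common linear factor is $x_0$. Writing $F_i = x_0 G_i$ with $G_i \in S_{d-1}$, I note that multiplication by $x_0$ is injective on $S$ (an integral domain), so the linear independence of $F_1, \dots, F_r$ forces $G_1, \dots, G_r$ to be linearly independent as well.

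The key geometric observation is the set-theoretic identity
\[
\V(F_1, \dots, F_r) = \V(x_0) \cup \V(G_1, \dots, G_r),
\]
which follows from $\V(x_0 G_i) = \V(x_0) \cup \V(G_i)$ together with the distributive law $\bigcap_i (A \cup B_i) = A \cup \bigcap_i B_i$. Writing the union as a disjoint union $\V(x_0) \sqcup \left( \V(G_1, \dots, G_r) \setminus \V(x_0) \right)$ and counting points gives
\[
| \V(F_1, \dots, F_r)| = p_{m-1} + N,
\]
where $N$ denotes the number of points of $\V(G_1, \dots, G_r)$ lying off the hyperplane $x_0 = 0$.

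Next I would recast $N$ as an affine count. The points of $\PP^m$ with $x_0 \neq 0$ are in bijection with $\AA^m(\Fq)$ via $(x_0 : \cdots : x_m) \mapsto (x_1/x_0, \dots, x_m/x_0)$, and under this identification $G_i$ vanishes precisely when its dehomogenization $g_i(y_1, \dots, y_m) := G_i(1, y_1, \dots, y_m)$ does. Since dehomogenization is a linear isomorphism from $S_{d-1}$ onto the space of polynomials in $\Fq[y_1, \dots, y_m]$ of degree at most $d-1$, the polynomials $g_1, \dots, g_r$ are again linearly independent and have degree at most $d-1$. Because $1 \le d-1 < q$ (guaranteed by $1 < d \le q$), Theorem~\ref{HP} applies and yields $N \le H_r(d-1, m)$; and as $r \le m+1$, the explicit formula there gives $H_r(d-1, m) = (d-2)q^{m-1} + \lfloor q^{m-r} \rfloor$.

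Finally I would assemble the estimate. Using $p_{m-1} = q^{m-1} + p_{m-2}$, we obtain
\[
| \V(F_1, \dots, F_r)| \le p_{m-1} + (d-2)q^{m-1} + \lfloor q^{m-r} \rfloor = (d-1)q^{m-1} + p_{m-2} + \lfloor q^{m-r} \rfloor,
\]
which is exactly the desired bound~\eqref{DesiredBound}. I do not expect a serious obstacle; the only points demanding genuine care are verifying that linear independence survives both multiplication by $x_0$ and dehomogenization, so that Theorem~\ref{HP} is legitimately applied to $r$ linearly independent polynomials, and confirming the degree condition $d-1 < q$, which the hypothesis $d \le q$ supplies precisely.
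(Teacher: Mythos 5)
Your proposal is correct and follows essentially the same route as the paper: both normalize the common factor to $x_0$, split $\V(F_1,\dots,F_r)$ into the full hyperplane $\V(x_0)$ and the affine part off it, and bound the affine part by Theorem~\ref{HP} applied to the $r$ linearly independent dehomogenized polynomials of degree at most $d-1$ (your $g_i$ coincide with the paper's $f_i = F_i(1,x_1,\dots,x_m)$). The only cosmetic difference is that you verify linear independence via factoring out $x_0$ and the isomorphism $S_{d-1}\cong \Fq[y_1,\dots,y_m]_{\le d-1}$, whereas the paper deduces it directly from injectivity of dehomogenization on $S_d$; both are valid.
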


\begin{proof}
Suppose $H \in S$ is a common 
linear factor of $F_1, \dots, F_r$. Then $H$ is necessarily  homogeneous and 
we may assume without loss of generality that $H = x_0$. Thus $x_0 \mid F_i$ for all $i = 1, \dots, r$. 
Write $f_i (x_1,  x_2, \dots, x_m) = F_i(1, x_1, \dots, x_m)$ for $i = 1, \dots, r$ and let $X'$
denote the set of common zeros in $\AA^m(\Fq)$ of the polynomials $f_1, \dots , f_r\in \Fq[x_1, \dots , x_m]$. 
Note that $X' = \V(F_1, \dots, F_r) \cap \{x_0 = 1 \}$ and so  
$$
 \V(F_1, \dots, F_r) = X'\cup X'' 
\quad \text{where} \quad X'' :=  \V(F_1, \dots, F_r) \cap \{x_0 = 0 \} = \V(x_0), 
$$
Since $F_1, \dots, F_r$ are linearly independent, so are $f_1, \dots, f_r$. Also  $\deg f_i \le d-1< q$ for each $i=1, \dots , r$. 
By Theorem \ref{HP},
 $|X'| \le (d-2)q^{m-1} + \lfloor q^{m-r} \rfloor$. It follows that 
\begin{equation*}
\begin{split}
| \V(F_1, \dots, F_r)|  = |X'| + |X''|  
& \le (d-2)q^{m-1} + \lfloor q^{m-r} \rfloor + p_{m-1}. 
\end{split}
\end{equation*}
This yields \eqref{DesiredBound}. 
\end{proof}

\section{Coprime Close Families} 
\label{comb}
Motivated by the notion of a ``close family of sets" 
introduced and studied in \cite{GL}, we consider an analogous notion for finite families of homogeneous polynomials of the same degree. We will be particularly interested when the polynomials in this family are relatively prime. 
In what follows, the fact that $S=\Fq[x_0,x_1, \dots , x_m]$ is a unique factorization domain (UFD) will be tacitly used; in particular, note that any finite collection of polynomials in $S$ have a gcd (= greatest common divisor) and it is unique up to multiplication by a nonzero constant, i.e., an element of $\Fq^*$. 
Thus it makes sense to talk about the degree of ``the'' gcd of finitely many polynomials. For $G_1, \dots , G_r\in S$, we shall often write $\gcd(G_1, \dots, G_r) = 1$ to mean that $G_1, \dots, G_r$ are relatively prime, i.e., they have no nonconstant common factor.  We will also tacitly use the elementary and well-known fact that factors of a homogeneous polynomial in $S$ are necessarily homogeneous. 
\begin{definition}\label{close}
Let $k$ be a positive integer and $\mathcal{G}_r = \{G_1, \dots, G_r\}$ be a subset of $S$ 
consisting of $r$ linearly independent homogeneous polynomials of degree $k$. We say that $\mathcal{G}_r$ is \emph{close} if $\deg \gcd (G_i, G_j) = k-1$ for all $i,j=1, \dots , r$ with $i\ne j$. Also we say that $\mathcal{G}_r$ is \emph{coprime close} if 
it is close and if  $\gcd(G_1, \dots, G_r) = 1$. 
\end{definition}

The original definition in \cite{GL} of a close family was in the context of subsets of cardinality $k$ of the set $[n]:=\{1, \dots , n\}$, where $n,k$ are positive integers with $k \le n$. In the same way, for an arbitrary set $N$ of cardinality $n$, upon letting $I_k(N)$ denote the set of all subsets of $N$ of cardinality $k$, we define a family $\Lambda \subseteq I_k(N)$ 
to be \emph{close} if 
$|A\cap B| = k-1$ for all $A, B \in \Lambda$ with $A\ne B$.  We state below a useful consequence of the 
Structure Theorem for Close Families proved in \cite{GL}. 

\begin{proposition}
\label{pro:structure}
Let $k,n$ be positive integers with $k\le n$, and let $N$ be a finite set with $n$ elements. Suppose 
$\Lambda \subseteq I_k(N)$ 
is close and $|\Lambda| =r \ge 1$. Then 
$$
\left|\displaystyle{\bigcap_{A \in \Lambda}}A\right| =k-1 \ \ {\rm or} \  \ k-r+1.
$$ 
Moreover,  if 
$1< k < n$ and if the intersection of all $A \in \Lambda$ is empty, then there exist distinct elements $\nu_1, \dots , \nu_r$ in $N$ such that
$$
\Lambda = \big\{ \{\nu_1, \dots, \check{\nu_i}, \dots , \nu_r\} : i=1, \dots , r \big\},
$$
where $ \check{\nu_i}$ indicates that $\nu_i$ is deleted. 
\end{proposition}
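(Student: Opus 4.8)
The plan is to translate the closeness condition into a statement about single-element swaps and then to establish the familiar dichotomy that a close family is either a \emph{star} or a \emph{bundle}; the two stated values of $|\bigcap_{A\in\Lambda}A|$ and the explicit description in the second part will then drop out by elementary counting. First I would record that for distinct $A,B\in\Lambda$ the hypothesis $|A\cap B|=k-1$ is equivalent to $|A\setminus B|=|B\setminus A|=1$, i.e. $B$ is obtained from $A$ by removing one element and adjoining another. The case $r=1$ is immediate since then $\bigcap_{A\in\Lambda}A=A_1$ has size $k=k-r+1$, so I may assume $r\ge 2$, in which case $\bigcap_{A\in\Lambda}A\subseteq A_1\cap A_2$ already forces $|\bigcap_{A\in\Lambda}A|\le k-1$.

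For the dichotomy I would fix one member $A_1\in\Lambda$ and for each other $A_j$ write $A_j=(A_1\setminus\{x_j\})\cup\{y_j\}$ with $x_j\in A_1$ and $y_j\notin A_1$. The crucial observation is a swap lemma: for distinct $i,j\ge 2$ with $x_i\ne x_j$ one computes $|A_i\cap A_j|=(k-2)+|\{y_i\}\cap\{y_j\}|$, so closeness ($|A_i\cap A_j|=k-1$) forces $y_i=y_j$. Two mutually exclusive possibilities now arise. If all removed elements $x_2,\dots,x_r$ coincide with a single $x\in A_1$, then every member of $\Lambda$ contains the $(k-1)$-set $A_1\setminus\{x\}$, whence $\bigcap_{A\in\Lambda}A=A_1\setminus\{x\}$ has size exactly $k-1$; this is the \emph{star} case. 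Otherwise some $x_i\ne x_j$, giving $y_i=y_j=:y$, and a short propagation argument finishes the job: for each $l\ge 2$ at least one of $x_i,x_j$ differs from $x_l$, and the swap lemma applied to that pair yields $y_l=y$. Hence every member of $\Lambda$ lies in the $(k+1)$-set $T:=A_1\cup\{y\}$; this is the \emph{bundle} case.

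In the bundle case each $A\in\Lambda$ equals $T$ with a single element deleted, distinct members deleting distinct elements, so $\bigcap_{A\in\Lambda}A=T\setminus\{t_1,\dots,t_r\}$ has size $(k+1)-r=k-r+1$; together with the star computation this settles the first assertion. For the second, assume $1<k<n$ and $\bigcap_{A\in\Lambda}A=\emptyset$. Since $k>1$ the star value $k-1$ is positive, so the star case is excluded and we are in the bundle case with $k-r+1=0$, i.e. $r=k+1$. As $T$ has exactly $k+1$ subsets of size $k$ and $\Lambda$ consists of $r=k+1$ distinct such subsets, $\Lambda$ must be the entire collection of $k$-subsets of $T$; writing $T=\{\nu_1,\dots,\nu_r\}$ gives $\Lambda=\{\{\nu_1,\dots,\check{\nu_i},\dots,\nu_r\}:i=1,\dots,r\}$, as asserted.

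I expect the dichotomy—specifically the swap lemma and the propagation step that pins every $y_j$ to one common element $y$—to be the only genuine obstacle; once the star/bundle alternative is in hand, both assertions reduce to routine counting. This dichotomy is precisely the specialization of the Structure Theorem for close families in \cite{GL}, so one could alternatively invoke that theorem directly and pass straight to the counting.
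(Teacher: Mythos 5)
Your proof is correct, but it takes a genuinely different route from the paper's. The paper does not prove the dichotomy at all: its proof of Proposition \ref{pro:structure} consists of the single observation that the Structure Theorem for Close Families in \cite{GL} (Theorem 4.2 there, together with Definition 4.1 and Remark 4.1) was stated for subsets of $[n]=\{1,\dots,n\}$ but carries over \emph{verbatim} when $[n]$ is replaced by an arbitrary finite set $N$, and then cites that theorem --- exactly the alternative you mention in your closing sentence. You instead reconstruct the star/bundle dichotomy from scratch: the swap lemma (writing $A_j=(A_1\setminus\{x_j\})\cup\{y_j\}$, the condition $x_i\ne x_j$ forces $y_i=y_j$), the propagation step pinning every $y_l$ to a common element $y$, and the counting in each case; your handling of the degenerate cases ($r=1$, $r=2$, and the exclusion of the star case when the intersection is empty and $k>1$, which forces $r=k+1$) is also sound. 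What your approach buys is self-containedness: the reader need not consult \cite{GL} or verify that its arguments transfer from $[n]$ to $N$. What the paper's approach buys is brevity and reuse of an established result. In effect, your argument is a direct proof of precisely the special case of the structure theorem that the paper imports by citation.
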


\begin{proof} 
If $r=1$, then there is nothing to prove. Suppose $r\ge 2$. Note that the proof of the 
Structure Theorem for Close Families \cite[Thm. 4.2]{GL}, the notions used therein from \cite[Defn. 4.1]{GL} and the observations in \cite[Remark 4.1]{GL} carry over \emph{verbatim} if $[n]$ is replaced by $N$. Now the desired result is an immediate consequence of Theorem~4.2 and Remark 4.1 of \cite{GL}. 
\end{proof}

In our setting of coprime close families of homogeneous polynomials, the result takes the following form. 
Recall that $r$ always denotes a positive integer. 

\begin{theorem}\label{structure}
Let $k$ be a positive integer and $\mathcal{G}_r=\{G_1, \dots , G_r\}$ be a coprime close family of $r$ linearly independent 
polynomials in $S_k$. Then  $k = 1$ or $k = r-1$. Moreover, if $k>1$, then there exist homogeneous linear polynomials $H_1, \dots , H_r\in S$ such that no two among $H_1, \dots , H_r$ differ by a nonzero constant, and moreover $G_i= H_1\cdots \check{H}_i \cdots H_r$, where $\check{H}_i$ indicates that the factor $H_i$ is omitted.
\end{theorem}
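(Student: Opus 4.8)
The plan is to reduce the assertion to the purely combinatorial Proposition~\ref{pro:structure} by first showing that each $G_i$ is a squarefree product of homogeneous linear forms, so that the multiset of irreducible factors of $G_i$ becomes an honest \emph{set} of linear factors. Once this is done, I would attach to each $G_i$ the set $A_i$ of linear forms (taken up to scalars) dividing it, regard $\{A_1,\dots,A_r\}$ as a family of $k$-subsets of the finite set $N$ of all linear forms dividing some $G_j$, and read off the conclusion from the structure theorem for close families. Throughout I may assume $r\ge 2$, since for $r=1$ coprimality forces $G_1$ to be constant and the statement is degenerate.

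The crux is the squarefree reduction, and it is here that coprimality is used decisively. Let $P$ be any irreducible factor of some $G_i$, say $G_1$, and let $a\ge 1$ be its multiplicity in $G_1$. For every $j\ne 1$, closeness gives $\deg\gcd(G_1,G_j)=k-1$, so $G_1=\gcd(G_1,G_j)\,L_j$ with $L_j$ a single linear form; comparing the multiplicity of $P$ on both sides shows that the multiplicity of $P$ in $L_j$ equals $a-\min(a,a_j)$, where $a_j$ denotes the multiplicity of $P$ in $G_j$. Since $\deg L_j=1$, this forces $\deg P=1$: otherwise $P\nmid L_j$, so $a_j\ge a$ for every $j$, whence $P\mid\gcd(G_1,\dots,G_r)$, contradicting $\gcd(G_1,\dots,G_r)=1$. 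With $\deg P=1$ in hand, the multiplicity of $P$ in the linear form $L_j$ is $0$ or $1$, so $a_j\ge a-1$ for every $j$; if $a\ge 2$ this gives $a_j\ge 1$ for all $j$ (including $j=1$), and again $P\mid\gcd(G_1,\dots,G_r)$, a contradiction. Hence every irreducible factor of every $G_i$ is linear and occurs with multiplicity $1$, so each $G_i$ is a product of $k$ distinct linear forms.

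With squarefreeness established, the translation is routine. Writing $A_i\subseteq N$ for the set of linear factors of $G_i$, I would note that $|A_i|=k$, that $\deg\gcd(G_i,G_j)=|A_i\cap A_j|$ turns closeness into $|A_i\cap A_j|=k-1$, that linear independence of the $G_i$ forces the $A_i$ to be distinct (so $\{A_1,\dots,A_r\}$ has exactly $r$ elements), and that $\gcd(G_1,\dots,G_r)=1$ becomes $\bigcap_i A_i=\emptyset$. Proposition~\ref{pro:structure} then gives $\big|\bigcap_i A_i\big|=k-1$ or $k-r+1$; since this intersection is empty, either $k=1$ or $k=r-1$, which is the first assertion. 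When $k>1$ one is in the case $k=r-1\ge 2$, and since distinctness of the $A_i$ rules out $N=A_1$ we have $k<n:=|N|$, so the second part of Proposition~\ref{pro:structure} applies and yields distinct $\nu_1,\dots,\nu_r\in N$ with $A_i=\{\nu_1,\dots,\nu_r\}\setminus\{\nu_i\}$. Choosing linear forms $H_t$ representing $\nu_t$ then gives $G_i=H_1\cdots\check{H}_i\cdots H_r$ up to a nonzero scalar, which can be absorbed, and the $H_t$ are pairwise non-proportional because the $\nu_t$ are distinct. The only genuinely delicate point is the squarefree reduction of the second paragraph; everything after it is an appeal to the combinatorial structure theorem together with straightforward bookkeeping.
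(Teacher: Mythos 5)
Your proof takes the same route as the paper's own: first show that closeness plus coprimality force every $G_i$ to be a product of $k$ pairwise non-proportional linear forms, then pass to the sets $A_i$ of (classes of) linear factors and invoke Proposition~\ref{pro:structure}. Your squarefree reduction via multiplicities is the paper's argument in compressed form (the paper splits it into ``no irreducible factor of degree $\ge 2$'' and ``no repeated linear factor'', each ruled out by the observation that $G_i/\gcd(G_i,G_j)$ is linear, combined with $\gcd(G_1,\dots,G_r)=1$), and your bookkeeping in the translation step is, if anything, more careful than the paper's: you verify explicitly that the $A_i$ are distinct and that $k<|N|$, points the paper leaves implicit.

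The one step that does not survive scrutiny is the parenthetical ``up to a nonzero scalar, which can be absorbed.'' It cannot be, in general. Writing $G_i=c_i K_1\cdots\check{K}_i\cdots K_r$ for chosen representatives $K_t$ and rescaling $K_t\mapsto\lambda_t K_t$, absorption requires $\prod_{t\ne i}\lambda_t=c_i$ for all $i$, which forces $c_1\cdots c_r=\mu^{r-1}$ with $\mu=\lambda_1\cdots\lambda_r$; changing representatives only alters $c_1\cdots c_r$ by $(r-1)$-th powers, so the class of $c_1\cdots c_r$ in $\Fq^*/(\Fq^*)^{r-1}$ is a genuine obstruction. It can be nontrivial: over $\FF_3$ take $G_1=2x_1(x_0+x_1)$, $G_2=x_0(x_0+x_1)$, $G_3=x_0x_1$ (so $k=2$, $r=3$); this is a coprime close, linearly independent family, but any exact factorization $G_i=H_1\cdots\check{H}_i\cdots H_3$ would give $G_1G_2G_3=(H_1H_2H_3)^2$, whereas $G_1G_2G_3=2\bigl(x_0x_1(x_0+x_1)\bigr)^2$ is not a square because $2$ is not a square in $\FF_3$. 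You should not be judged harshly for this, however: the paper's own proof ends with ``the desired result follows readily from Proposition~\ref{pro:structure}'' and silently commits the same sin, and in fact the displayed equality in Theorem~\ref{structure} is false as literally stated --- it must be read up to a nonzero constant factor (equivalently: the linear factors of $G_i$ are exactly $H_1,\dots,\check{H}_i,\dots,H_r$ up to proportionality). This weaker form is all that is used downstream: the proof of Lemma~\ref{Case3-2} needs only the divisibility relations $H_1\mid F_i$ for $i\ge 2$ and $H_1\nmid F_1$, which are insensitive to scalars.
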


\begin{proof}
If $k=1$, there is nothing to prove. 
Suppose $k\ge 2$. Observe the following. 
\begin{enumerate}
\item[(i)] No polynomial in $\mathcal{G}_r$ has an irreducible factor of degree $\ge 2$. 

\item[(ii)] No polynomial in $\mathcal{G}_r$ has a repeated linear factor, i.e., $H^2 \nmid G_i$ for all $i=1, \dots , r$ and $H\in S_1$. 
\end{enumerate}
To see (i), suppose $Q \mid G_i$ for some $i\in \{1, \dots , r\}$ and $Q \in S$, where $Q$ is  irreducible of degree $\ge 2$. 
Since $\deg G_j = k = \deg G_i$ and $\deg \gcd (G_i, G_j) = k-1$ for all $j=1, \dots , r$ with $j\ne i$, it follows that $Q\mid G_j$ for all $j=1, \dots , r$. But this contradicts the assumption that $\gcd(G_1, \dots, G_r) = 1$. Likewise, to see (ii) suppose $H^2 \mid G_i$ for some $i\in \{1, \dots , r\}$ and $H \in S_1$.  Then $H\mid G_j$ for all $j=1, \dots , r$, again contradicting 
$\gcd(G_1, \dots, G_r) = 1$. From (i) and (ii), we deduce that each $G_i$ is a product of $k$ homogeneous linear factors, which are distinct in the sense that no two of them differ by a nonzero constant. 
Let us define two elements of $S$ to be equivalent if they differ by a nonzero constant. 
This induces an equivalence relation on the set $S_1\setminus\{0\}$ of nonzero homogeneous linear polynomials; let $N$ denote the set of equivalence classes. Note that $N$ is a finite set of cardinality $n:=p_m$. For each $G_i \in \mathcal{G}_r$, let $A_i$ denote the set of equivalence classes of homogeneous linear factors of $G_i$. 
Then $\Lambda:=\{A_1, \dots , A_r\}$ is a close family in $I_k(N)$. 
Moreover, since $\gcd(G_1, \dots, G_r) = 1$, we must have 
$|A_1\cap \cdots \cap A_r| = 0$. 
Now the desired result follows readily from 
Proposition \ref{pro:structure}.
\end{proof}
%

We will now outline a general strategy to prove the TBC when $1 < r\le m+1$ and $1<d<q-1$. The notations introduced here will be used in the next two sections. 
Let $F_1, \dots, F_r$ be linearly independent homogeneous polynomials in $S_d$. Fix a gcd $G$ of $F_1, \dots , F_r$ and let $G_1, \dots , G_r\in S$ be such that $F_i = G G_i$ for $i=1, \dots , r$. Also fix a gcd, say $F_{ij}$, of $F_i$ and $F_j$ as well as a gcd, say $G_{ij}$, of $G_i$ and $G_j$ for all $i,j=1, \dots , r$ with $i\ne j$. Note that $G, G_i, F_{ij}$ and $G_{ij}$ are homogeneous. Let 
$$
b:= \deg G \quad \text{and} \quad\quad b_{ij}:= \deg F_{ij} \quad \text{for $i,j=1, \dots , r$ with $i\ne j$. }
$$
Evidently $\deg G_i = d-b$ for all $i=1, \dots , r$ and $\deg G_{ij} = b_{ij} - b$  for all $i,j=1, \dots , r$ with $i\ne j$. We will refer to $b_{ij}$ as the \emph{correlation factor} between $F_i$ and $F_j$. Since 
$F_1, \dots, F_r$ are linearly independent, we see that $G_1, \dots , G_r$ are linearly independent and $0\le b_{ij} \le d-1$ for all $i,j=1, \dots , r$ with $i\ne j$. Also it is clear that $\gcd(G_1, \dots , G_r) = 1$ . The proof will be divided into three cases as follows. 
\begin{description}
\item[Case 1] $b_{ij} = 0$ for some $i,j\in\{1, \dots , r\}$ with $i \ne j$. 

\item[Case 2] $0 < b_{ij} < d-1$ for some $i,j\in\{1, \dots , r\}$ with $i \ne j$.

\item[Case 3] $b_{ij} = d-1$ for all $i,j\in\{1, \dots , r\}$ with $i \ne j$.
\end{description}
The first two cases will be referred to as that of low correlation and will be dealt with in Section \ref{low} below. In Case~3, we see that $\{G_1, \dots , G_r\}$ is a coprime close family in $S_k$ where $k:=d-b$. 
Hence in view of Theorem \ref{structure}, this case divides itself into exactly two subcases: (i) $b= d-1$, and 
(ii) $b=d-r+1$. These two will be considered in Section \ref{high}. The goal in each case is to prove an inequality such as \eqref{DesiredBound}. In the case of low correlation, we will in fact obtain a better bound. 

\section{The Case of Low Correlation}
\label{low}

The first two cases in the strategy outlined at the end of Section \ref{comb} will be considered in the following two lemmas. 
It will be seen that in each of them, we obtain an inequality better than the desired one, namely, \eqref{DesiredBound}. In particular,  the 
Tsfasman-Boguslavsky bound $T_r(d,m)$ is not attained in these cases. The arguments in this section are reminiscent of those in the proof of Theorem 2 in Boguslavsky \cite{Bog}. 

\begin{lemma} 
\label{case1}
Assume that $ r > 1$ and $1<d< q-1$. Let 
$F_1, \dots, F_r$ be linearly independent 
polynomials in $S_d$ such that 
$\deg \gcd (F_i,F_j) =0$ for some $i,j=1, \dots , r$ with $i \ne j$. 
Then
$$
| \V(F_1, \dots, F_r)| < (d - 1) q^{m-1} + p_{m-2}. 
$$
\end{lemma}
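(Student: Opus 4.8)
The plan is to bound $|\V(F_1,\dots,F_r)|$ by focusing on the two polynomials $F_i, F_j$ with $\deg\gcd(F_i,F_j)=0$, since adding the remaining polynomials can only shrink the common zero set. Because $F_i$ and $F_j$ are homogeneous, nonconstant (as $d>1$), and have no nonconstant common factor, Lemma~\ref{complete} applies directly: $\V(F_i,F_j)$ is a complete intersection of codimension $2$ in $\PP^m$, with dimension $m-2$ and degree $(\deg F_i)(\deg F_j)=d^2$. Complete intersections are equidimensional, so Theorem~\ref{cardinality} gives
\begin{equation*}
|\V(F_1,\dots,F_r)| \le |\V(F_i,F_j)| \le d^2\, p_{m-2}.
\end{equation*}

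The remaining task is purely arithmetic: to show that this bound $d^2 p_{m-2}$ is strictly smaller than the target $(d-1)q^{m-1}+p_{m-2}$. First I would rewrite the desired strict inequality as
\begin{equation*}
(d^2-1)\,p_{m-2} < (d-1)q^{m-1},
\end{equation*}
i.e. $(d+1)p_{m-2} < q^{m-1}$ after dividing by $d-1>0$. Using $p_{m-2}=q^{m-2}+q^{m-3}+\dots+1 = (q^{m-1}-1)/(q-1)$, this becomes $(d+1)(q^{m-1}-1) < (q-1)q^{m-1}$, which I expect to verify by comparing $d+1$ against $q-1$: since the hypothesis gives $d<q-1$, we have $d+1<q$, hence $d+1\le q-1$, and the inequality should follow after accounting for the $-1$ term (which only helps). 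I would carry out this comparison carefully, as it is the crux of the estimate.

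The main obstacle is not conceptual but making sure the final numerical inequality is genuinely strict and uses the hypothesis $d<q-1$ (rather than merely $d\le q$) in an essential way; the factor $d+1$ is exactly what forces the need for $d+1\le q-1$, explaining why this case requires the stronger degree bound. A minor point to handle is the degenerate case $m=1$, where $p_{m-2}=p_{-1}=0$ makes the complete-intersection bound trivially $0$; but since the strategy at the end of Section~\ref{comb} assumes $m>1$, and in any event the bound holds vacuously there, this causes no difficulty. I would also note that the strictness automatically yields the stated conclusion that the Tsfasman--Boguslavsky bound is not attained in this case.
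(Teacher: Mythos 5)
Your proposal is correct and follows essentially the same route as the paper: reduce to the pair $F_i,F_j$ with trivial gcd, invoke Lemma~\ref{complete} and Theorem~\ref{cardinality} to get the bound $d^2p_{m-2}$, and then use $d<q-1$ (so $d+1\le q-1$) together with $(q-1)p_{m-2}=q^{m-1}-1$ to conclude strictly. The paper's arithmetic is organized as $d^2=(d-1)(d+1)\le(d-1)(q-1)$ rather than dividing by $d-1$, but this is the same estimate in a different dress.
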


\begin{proof}
Let us assume, 
without loss of generality, that $b_{12} = 0$, i.e., $F_1, F_2$ do not have a nonconstant common factor. Now by  
Lemma  \ref{complete}, 
$V(F_1, F_2)$ is a complete intersection and hence by 
Theorem~\ref{cardinality}, 
\begin{equation*}
\begin{split}
|\V(F_1, F_2)| &\le  d^2 p_{m-2}\\
		& = (d-1)(d+1) p_{m-2} + p_{m-2}\\
		& \le (d-1)(q-1) p_{m-2} + p_{m-2}  \qquad  \text{[since $d< q-1$]}\\ 
		& = (d-1) (q^{m-1} - 1) + p_{m-2}\\
		& <(d-1)q^{m-1} + p_{m-2}   \qquad \qquad \quad \text{[since $d > 1$].}\\
\end{split}
\end{equation*}
As a consequence, $|\V(F_1, F_2, \dots, F_r)| \le |\V(F_1, F_2)| < (d-1)q^{m-1} + p_{m-2}$. 
\end{proof}


\begin{lemma} 
\label{case2}
Assume that $ r > 1$ and $1<d< q-1$. Let  $F_1, \dots, F_r$ be linearly independent 
polynomials in $S_d$ such that
$0 < \deg \gcd (F_i,F_j) < d- 1 $ for some $i,j=1, \dots , r$ with $i \ne j$. 
Then
$$
| \V(F_1, \dots, F_r)| < (d - 1) q^{m-1} + p_{m-2}. 
$$
\end{lemma}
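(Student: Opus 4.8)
The plan is to mirror the proof of Lemma \ref{case1}, but now exploiting the \emph{intermediate} common factor to split the common zero locus of two of the polynomials into a hypersurface and a complete intersection. Assume without loss of generality that the pair realizing the hypothesis is $F_1, F_2$, and set $a := b_{12} = \deg \gcd(F_1, F_2)$, so that $0 < a < d-1$; note this already forces $d \ge 3$. Fixing a gcd $F_{12}$ of $F_1$ and $F_2$, I would write $F_1 = F_{12}\, G_1'$ and $F_2 = F_{12}\, G_2'$, where $G_1', G_2' \in S$ are homogeneous of degree $d-a$ with $\gcd(G_1', G_2') = 1$; since $d-a \ge 2$, both $G_1'$ and $G_2'$ are nonconstant. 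The crucial set-theoretic observation is that
\[
\V(F_1, F_2) = \V(F_{12}) \cup \V(G_1', G_2'),
\]
because a point $P$ annihilates both $F_{12}G_1'$ and $F_{12}G_2'$ exactly when $F_{12}(P) = 0$ or $G_1'(P) = G_2'(P) = 0$.

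Next I would bound the two pieces separately. The set $\V(F_{12})$ is a hypersurface of degree $a$, and since $a < d-1 < q+1$, Theorem \ref{Serre} gives $|\V(F_{12})| \le a\,q^{m-1} + p_{m-2}$. On the other hand, $G_1'$ and $G_2'$ are nonconstant and coprime, so by Lemma \ref{complete} the set $\V(G_1', G_2')$ is a complete intersection of codimension $2$ and degree $(d-a)^2$; hence Theorem \ref{cardinality} yields $|\V(G_1', G_2')| \le (d-a)^2 p_{m-2}$. Combining these through the union bound and using $\V(F_1, \dots, F_r) \subseteq \V(F_1, F_2)$,
\[
|\V(F_1, \dots, F_r)| \le a\,q^{m-1} + p_{m-2} + (d-a)^2 p_{m-2}.
\]
Thus it suffices to establish the strict inequality $(d-a)^2 p_{m-2} < (d-1-a)\,q^{m-1}$, where $d-1-a \ge 1$.

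Finally, writing $t := d-a$, so that $2 \le t \le d-1$, and substituting $p_{m-2} = (q^{m-1}-1)/(q-1)$, the goal becomes $t^2(q^{m-1}-1) < (t-1)(q-1)\,q^{m-1}$. This is precisely where the hypothesis $d < q-1$ is used decisively: from $t \le d-1 \le q-3$ one gets $q-1 \ge t+2$, whence $(t-1)(q-1) \ge (t-1)(t+2) = t^2 + t - 2 \ge t^2$ since $t \ge 2$. Consequently $(t-1)(q-1)\,q^{m-1} \ge t^2 q^{m-1} > t^2(q^{m-1}-1)$, which is the required strict inequality. The decomposition and the two cardinality bounds are routine given the earlier results, so the one step demanding care—and the main (if elementary) obstacle—is this numerical estimate comparing $t^2$ with $(t-1)(q-1)$, which is exactly what fails outside the range $d < q-1$.
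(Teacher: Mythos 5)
Your proof is correct and takes essentially the same route as the paper: the identical decomposition $\V(F_1,F_2)=\V(F_{12})\cup\V(G_1',G_2')$, the identical two bounds (Theorem \ref{Serre} for the hypersurface $\V(F_{12})$, and Lemma \ref{complete} with Theorem \ref{cardinality} for the codimension-$2$ complete intersection), followed by an elementary numerical estimate that uses $d<q-1$ in the same decisive way. The only difference is cosmetic bookkeeping at the end: the paper compares $|\V(F_1,F_2)|$ with $T_2(d,m)$ and shows the difference is below $-q^{m-2}$ via a factorization $q^{m-1}(d-b_{12}-1)(q-d+b_{12}-2)$, whereas you reduce directly to $(d-a)^2 p_{m-2} < (d-1-a)\,q^{m-1}$ and verify it with $(t-1)(q-1)\ge t^2$ --- the same arithmetic, organized slightly more cleanly.
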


\begin{proof}
Let us assume, 
without loss of generality, that 
$0<b_{12}<d-1$. Fix a gcd $F_{12}$ of $F_1$ and $F_2$ and let $Q_1, Q_2\in S$ be such that 
$F_i = F_{12}Q_i$ for $i=1,2$. Note that $Q_1$ and $Q_2$ are 
coprime and both are nonconstant homogeneous polynomials of degree $d-b_{12}$. Let 
$$
X' = \V(F_1, F_2),  \quad Y' = \V(F_{12}) \quad \text{and} \quad X'' = \V(Q_1, Q_2). 
$$
In view of Lemma \ref{complete}, $X''$ is a complete intersection of dimension $m-2$ and degree
$(d - b_{12})^2$ and consequently by Theorem \ref{cardinality}, 
 $|X ''| \le (d - b_{12})^2 p_{m-2}$. On the other hand, Theorem \ref{Serre} applies to $Y'$ and so 
$|Y'| \le b_{12}q^{m-1} + p_{m-2}$. 
It follows that 
$$
|X'| \le |Y'| + |X''| \le b_{12} q^{m-1} + p_{m-2} + (d-b_{12})^2 p_{m-2}.
$$
We shall now estimate the difference between $|X'|$ and $T_2(d,m)$. 
\begin{equation*}
\begin{split}
& |X'| - (d-1)q^{m-1} - p_{m-2} -q^{m-2} \\
					& \le (b_{12}-d+1) q^{m-1} + (d-b_{12})^2 p_{m-2} - q^{m-2} \\
					& = -\frac{1}{q-1} \left[ (d-b_{12}-1) q^{m-1} (q-1)  - (d-b_{12})^2 (q^{m-1} -1)
+ q^{m-1} - q^{m-2} \right]\\
					& =-\frac{1}{q-1}[q^{m-1} (q-1) (d-b_{12}-1) -q^{m-1} \{(d-b_{12})^2 - 1\} + (d-b_{12})^2 - q^{m-2}]\\ 
					& = -\frac{1}{q-1} [q^{m-1} (d-b_{12}-1) (q - d + b_{12} -2) + (d-b_{12})^2 - q^{m-2}]
\end{split}
\end{equation*}
Since $0<b_{12}<(d-1)$, we have $d-b_{12}-1\ge 1$. Also $q-1>d$. 
Consequently, 
 $q - d + b_{12} - 2 \ge 1$. Thus, 
\begin{eqnarray*}
\begin{split}
 & \  |X '| - (d-1)q^{m-1} - p_{m-2} -q^{m-2}  \\
& \le    -\frac{1}{q-1} [q^{m-1} (d-b_{12}-1) (q - d + b_{12} -2) + (d-b_{12})^2 - q^{m-2}] \\
&<   -\frac{1}{q-1} [q^{m-1} - q^{m-2}] 
= - q^{m-2}.
\end{split}
\end {eqnarray*}
It follows that 
$$
|X'| - (d-1)q^{m-1} - p_{m-2}  < - q^{m-2} + q^{m-2}  = 0.$$
Thus, $ |X|\le |X'| < (d-1)q^{m-1} +  p_{m-2}$, as desired.  
\end{proof}

\section{The Case of High Correlation}
\label{high}

As usual, we will denote by $\hpm$ the dual projective space consisting of all hyperplanes in $\PP^m$; in other words, $\hpm$ is the collection of $\V (H)$ as $H$ varies over nonzero homogeneous linear polynomials  in $S:=\Fq[x_0,x_1, \dots , x_m]$. We begin with a somewhat general proposition about 
intersections of hyperplanes in projective spaces, which will be useful 
later. Although we continue to assume that the base field is $\Fq$, this result and its proof is valid if $\Fq$ is replaced by an arbitrary field. 

\begin{proposition}
\label{Hplanes}
Assume that $1\le r \le m+1$. Let $H_1, \dots , H_r\in S_1$ be linearly independent homogeneous linear polynomials and let $\Pi_i:= \V(H_i)$ denote the hyperplane in $\PP^m$ defined by $H_i$ for $i=1, \dots , r$. Let $L:=\V(H_1, \dots , H_r)$ be the linear subvariety of $\PP^m$ defined by $H_1, \dots , H_r$ and $P$ be a point of $\PP^m$ such that $P\not\in L$. Then for any $\Pi\in \hpm$ passing through $P$, upon letting $L_{\Pi}:= L \cap \Pi$, we have 
$$
\codim_{\Pi} L_{\Pi} \;  = \; r-1 \; \text{ or } \; r.
$$
Moreover, if $H\in S_1$ is such that $\Pi= \V(H)$, then 
\begin{eqnarray*}
\codim_{\Pi} L_{\Pi} = r-1 &\Longleftrightarrow& \text{the restrictions } {H_1|}_{\Pi},  \dots, {H_r|}_\Pi \text{ are linearly dependent}  \\
&\Longleftrightarrow&  H = \sum_{i=1}^r \lambda_i H_i \text{ for some $\lambda_1, \dots, \lambda_r\in \Fq$,  not all zero}.
\end{eqnarray*}
\end{proposition}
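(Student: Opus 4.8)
The plan is to analyze the intersection $L_\Pi = L \cap \Pi$ by viewing the hyperplanes $\Pi_1, \dots, \Pi_r, \Pi$ through their defining linear forms and counting the dimension of the span of these forms. Since $L = \V(H_1, \dots, H_r)$ with $H_1, \dots, H_r$ linearly independent, we have $\dim L = m - r$, so $\codim_{\PP^m} L = r$. The subvariety $L_\Pi = \V(H_1, \dots, H_r, H)$ is cut out inside $\PP^m$ by the $r+1$ forms $H_1, \dots, H_r, H$, and its codimension in $\PP^m$ equals the dimension of the $\Fq$-linear span $W := \langle H_1, \dots, H_r, H \rangle \subseteq S_1$. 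Because $H_1, \dots, H_r$ are already linearly independent, $\dim W$ is either $r$ (when $H \in \langle H_1, \dots, H_r \rangle$) or $r+1$ (otherwise). First I would record that in either case $L_\Pi \subseteq \Pi$ is a proper linear subvariety, and compute $\codim_\Pi L_\Pi = \dim \Pi - \dim L_\Pi = (m-1) - (m - \dim W) = \dim W - 1$, which is therefore $r-1$ or $r$. This establishes the stated dichotomy.

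Next I would prove the two equivalences, which amount to unwinding the same linear-algebra fact. The condition $\codim_\Pi L_\Pi = r-1$ is, by the computation above, equivalent to $\dim W = r$, i.e.\ to $H \in \langle H_1, \dots, H_r\rangle$; this is precisely the second equivalent condition, that $H = \sum_{i=1}^r \lambda_i H_i$ for scalars $\lambda_i$ not all zero (not all zero since $H \ne 0$). For the first equivalent condition, I would pass to the restrictions on $\Pi$. Restriction of a linear form to the hyperplane $\Pi = \V(H)$ is an $\Fq$-linear map $S_1 \to \Gamma(\Pi, \mathcal{O}(1))$ whose kernel is exactly the line $\langle H \rangle$; concretely, after a linear change of coordinates one may take $H = x_0$, so that restriction to $\Pi$ sets $x_0 = 0$ and ${H_i|}_\Pi$ is obtained by deleting the $x_0$-term. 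The forms ${H_1|}_\Pi, \dots, {H_r|}_\Pi$ are linearly dependent iff some nontrivial combination $\sum \lambda_i H_i$ lies in the kernel $\langle H \rangle$, i.e.\ iff $\sum \lambda_i H_i = \mu H$ for some scalars with the $\lambda_i$ not all zero. Since $H_1, \dots, H_r$ are independent we must have $\mu \ne 0$, and dividing by $\mu$ gives exactly $H \in \langle H_1, \dots, H_r\rangle$. This closes the loop between all three conditions.

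The point $P$ and the hypothesis $P \notin L$ play only a supporting role: they guarantee that a hyperplane $\Pi$ through $P$ genuinely exists and, more importantly, force $H \notin \langle H_1, \dots, H_r\rangle$ to be possible, but the stated dichotomy and equivalences hold regardless. I would include a brief remark that $P \notin L$ ensures $L_\Pi \subsetneq L$ is consistent and that both cases of the dichotomy are realizable, though strictly the argument for the equivalences does not invoke $P$ at all.

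I do not anticipate a serious obstacle here, as the entire statement reduces to the dimension of a span of linear forms; the only point requiring care is the clean identification of ``restriction to $\Pi$ has kernel $\langle H\rangle$,'' which is why I would normalize $H = x_0$ to make the restriction map transparent. The mildly delicate bookkeeping is ensuring the scalar $\mu$ is nonzero so that the dependence relation among the restrictions actually pins down $H$ as a combination of the $H_i$ rather than merely relating the $H_i$ among themselves.
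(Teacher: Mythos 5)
Your proposal is correct and is essentially the same argument as the paper's: both reduce the statement to linear algebra by normalizing $H$ to a coordinate (so that restriction to $\Pi$ has kernel exactly $\langle H\rangle$) and identifying $\codim_\Pi L_\Pi$ with the rank of the relevant system of linear forms, and your observation that $\mu \ne 0$ by independence of the $H_i$ is precisely the paper's step showing $c \ne 0$. The only difference is presentational — the paper runs a case analysis on whether the restrictions $H_i|_\Pi$ are linearly dependent, whereas you compute $\codim_\Pi L_\Pi = \dim\langle H_1,\dots,H_r,H\rangle - 1$ directly — so there is nothing substantive to change.
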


\begin{proof}
Fix $P\in \PP^m \setminus L$ and let $0\ne H\in S_1$ and $\Pi = \V(H) \in \hpm$ be such that  $P\in \Pi$. 
By a linear change of coordinates, we may assume that $H = x_m$. Thus $\Pi$ can be nicely identified with $\PP^{m-1}$. Let  $\widetilde{H}_i(x_0, \dots , x_{m-1}):= H_i(x_0, \dots , x_{m-1}, 0)$ be the restriction of $H_i$ to $\Pi$ and let 
 $c_i \in \Fq$ be such that 
$H_i = \widetilde{H}_i + c_ix_m$ for $i =1, \dots , r$. 
Now $L_{\Pi}:= L \cap \Pi$ is the linear subvariety in $\PP^{m-1}$ defined by the vanishing of $\widetilde{H}_1, \dots , \widetilde{H}_r$. If $\widetilde{H}_1, \dots , \widetilde{H}_r$ are linearly independent, then it is clear that $\codim_{\Pi} L_{\Pi}   = r$. On the other hand, suppose $\widetilde{H}_1, \dots , \widetilde{H}_r$ are linearly dependent. Then there exist $\lambda_1, \dots, \lambda_r\in \Fq$,  not all zero, such that 
$$
\sum_{i=1}^r \lambda_i \widetilde{H}_i = 0 \quad \text{and hence} \quad 
\sum_{i=1}^r \lambda_i H_i = c \, x_m, \;   \text{ where } \; c: = \sum_{i=1}^r \lambda_i c_i.
$$
Since $H_1, \dots , H_r$ are linearly independent, we must have $c\ne 0$ and hence $L$ is unchanged if we replace one of the $H_i$'s by $x_m$. Suppose, without loss of generality, $H_1 = x_m$. Now $L_{\Pi}$ is defined by the vanishing of $\widetilde{H}_2, \dots , \widetilde{H}_r$. Moreover,   $\widetilde{H}_2, \dots , \widetilde{H}_r$ are linearly independent. It follows that $\codim_{\Pi} L_{\Pi}   = r-1$. This proves all the assertions in the lemma. 
\end{proof}

\begin{corollary}
\label{CorHplanes}
Assume that $1\le r \le m+1$. Let $H_1, \dots , H_r\in S_1$ be linearly independent  and let $L:=\V(H_1, \dots , H_r)$ and $P\in \PP^m \setminus L$. Then 
$$
\left| \left\{ \Pi \in \hpm : P \in \Pi \text{ and } \codim_{\Pi} L_{\Pi}   = r-1\right\}\right| = p_{r-2},
$$
where as in Proposition \ref{Hplanes}, $L_{\Pi}:= L \cap \Pi$ for any $\Pi \in \hpm$. 
\end{corollary}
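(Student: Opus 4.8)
The plan is to count, for a fixed point $P \in \PP^m \setminus L$, the hyperplanes $\Pi \ni P$ for which $\codim_\Pi L_\Pi = r-1$. By the second equivalence in Proposition \ref{Hplanes}, writing $\Pi = \V(H)$ for a nonzero $H \in S_1$, the condition $\codim_\Pi L_\Pi = r-1$ holds precisely when $H = \sum_{i=1}^r \lambda_i H_i$ for some $\lambda_1, \dots, \lambda_r \in \Fq$, not all zero. So the hyperplanes we wish to count are exactly those whose defining linear form lies in the $r$-dimensional subspace $W := \langle H_1, \dots, H_r\rangle$ of $S_1$ spanned by the (linearly independent) forms $H_1, \dots, H_r$. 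First I would translate the counting problem into one about this subspace $W$, remembering that two nonzero forms define the same hyperplane iff they differ by a nonzero scalar, so the set of such hyperplanes is naturally identified with $\PP(W) \cong \PP^{r-1}(\Fq)$, which has $p_{r-1}$ points in total.

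Next I would impose the incidence condition $P \in \Pi$. If $P = (a_0 : \cdots : a_m)$, then $P \in \V(H)$ means $H(a_0, \dots, a_m) = 0$. Evaluation at $P$ is an $\Fq$-linear functional $\mathrm{ev}_P \colon W \to \Fq$, $H \mapsto H(a_0, \dots, a_m)$. The hyperplanes through $P$ among our family correspond to the nonzero elements of $\ker(\mathrm{ev}_P)$, taken up to scalar. The key point is that $\mathrm{ev}_P$ is \emph{not} the zero functional on $W$: if it were, every form in $W$ — in particular every $H_i$ — would vanish at $P$, forcing $P \in \V(H_1, \dots, H_r) = L$, contradicting $P \notin L$. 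Hence $\mathrm{ev}_P$ is surjective onto $\Fq$ and its kernel is a subspace of $W$ of dimension exactly $r-1$.

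It then follows that the set of admissible hyperplanes is $\PP(\ker \mathrm{ev}_P) \cong \PP^{r-2}(\Fq)$, whose number of points is $p_{r-2}$, giving the claim. I would assemble these observations into a short chain: the family of hyperplanes in question biject with $\PP(\ker \mathrm{ev}_P)$, the kernel has dimension $r-1$, and therefore the count is $|\PP^{r-2}(\Fq)| = p_{r-2}$.

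I do not expect a genuine obstacle here; the proposition has already done the conceptual work by characterizing the codimension drop in terms of membership in the span $W$. The only subtlety to state carefully is the non-vanishing of $\mathrm{ev}_P$ on $W$, which is exactly where the hypothesis $P \notin L$ enters and which is what guarantees the kernel has the correct dimension $r-1$ rather than $r$. One should also note the boundary behavior: when $r=1$ the formula gives $p_{-1} = 0$, consistent with the fact that the span $W$ is then one-dimensional and its unique hyperplane $\V(H_1)$ passes through $P$ only if $P \in L$, so no admissible $\Pi$ exists — and indeed $\ker \mathrm{ev}_P = 0$ in that case, matching $p_{-2} = 0$ as well. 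The convention $p_k = 0$ for $k < 0$ thus handles the small cases automatically, so no separate argument is needed.
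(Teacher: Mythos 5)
Your proof is correct and is essentially the paper's own argument: both invoke Proposition \ref{Hplanes} to identify the admissible hyperplanes with the projective points $(\lambda_1:\cdots:\lambda_r)\in\PP^{r-1}(\Fq)$ satisfying the linear condition $\sum_{i=1}^r \lambda_i H_i(P)=0$, and both use $P\notin L$ to ensure this condition is nontrivial (your observation that $\mathrm{ev}_P$ is nonzero on $W$), yielding the count $p_{r-2}$. The only difference is cosmetic — you phrase the counting coordinate-freely via the kernel of the evaluation functional on $W=\langle H_1,\dots,H_r\rangle$, while the paper writes the same condition in the coordinates $(\lambda_1:\cdots:\lambda_r)$.
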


\begin{proof}
Since $P\in \PP^m \setminus L$, the evaluations $H_1(P), \dots , H_r(P)$ are not all zero. By Proposition \ref{Hplanes}, the set 
$$
 \left\{ \Pi \in \hpm : P \in \Pi \text{ and } \codim_{\Pi} L_{\Pi}   = r-1\right\}
$$ 
can be identified with the set  $ \left\{ (\lambda_1: \dots : \lambda_r) \in \PP^{r-1}(\Fq) : \sum_{i=1}^r \lambda_i H_i(P) = 0\right\}$, and the cardinality of the latter is clearly $p_{r-2}$. 
\end{proof}


Next lemma 
corresponds to the first subcase of Case 3 in the general strategy outlined at the end of Section \ref{comb}, but with 
the case covered by Lemma \ref{ded} excluded. 

\begin{lemma}
\label{Case3-1}
Assume that $1<d \le q$ and $1\le r \le m+1$. Let 
$F_1, \dots, F_r$ be linearly independent 
polynomials in $S_d$ and let $G$ be a gcd of $F_1, \dots , F_r$. 
If $\deg G=d-1$ and if $G$ has no linear factor, then
\begin{equation}
\label{StrictIneq}
| \V(F_1, \dots, F_r)| < (d - 1) q^{m-1} + p_{m-2}+  \lfloor q^{m-r} \rfloor .
\end{equation}
\end{lemma}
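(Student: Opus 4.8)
The plan is to use the factorization $F_i = G\,G_i$ to split $\V(F_1,\dots,F_r)$ into a hypersurface and a linear space, and then reduce the whole claim to a single quantitative estimate. Since $\deg G = d-1$ and $G$ has no linear factor, we have $d-1\ge 2$, and each $G_i$ is a nonzero homogeneous linear polynomial; as $F_1,\dots,F_r$ are linearly independent, so are $G_1,\dots,G_r$. A point $P$ satisfies $G(P)G_i(P)=0$ for all $i$ if and only if $G(P)=0$ or $G_1(P)=\dots=G_r(P)=0$, so
$$
\V(F_1,\dots,F_r)=\V(G)\cup L,\qquad L:=\V(G_1,\dots,G_r),
$$
where $L$ is a linear subvariety of $\PP^m$ of codimension $r$. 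Thus $|L|=p_{m-r}$ when $r\le m$ and $L=\varnothing$ when $r=m+1$, and in every case $|\V(F_1,\dots,F_r)|=|\V(G)|+p_{m-r}-|\V(G)\cap L|$.

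The first thing I would record is the strict form of Serre's bound for $G$. By Theorem \ref{Serre}, $|\V(G)|\le (d-1)q^{m-1}+p_{m-2}$, and equality would force $G$ to be a product of $d-1$ distinct homogeneous linear polynomials; since $G$ has no linear factor this is impossible, so $|\V(G)|\le (d-1)q^{m-1}+p_{m-2}-1$. This already settles two cases outright. If $r=m+1$ then $L=\varnothing$ and $\lfloor q^{m-r}\rfloor=0$, so $|\V(F_1,\dots,F_r)|=|\V(G)|<(d-1)q^{m-1}+p_{m-2}$, which is \eqref{StrictIneq}. Likewise, if $L\subseteq \V(G)$ (that is, $G$ vanishes identically on $L$), then $\V(F_1,\dots,F_r)=\V(G)$ and the same strict bound gives \eqref{StrictIneq}, since $\lfloor q^{m-r}\rfloor\ge 1$ for $r\le m$.

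There remains the main case $2\le r\le m$ with $G|_L\not\equiv 0$. Writing $s:=(d-1)q^{m-1}+p_{m-2}-|\V(G)|\ge 1$ for the Serre deficiency and $c:=|\V(G)\cap L|$, a direct computation using $p_{m-r}-q^{m-r}=p_{m-r-1}$ shows that \eqref{StrictIneq} is equivalent to the single inequality $s+c\ge p_{m-r-1}+1$. Here $c\ge 0$ and, when $c=0$, the bare strict bound $s\ge 1$ is not enough: one must show that a linear-factor-free hypersurface of degree $d-1$ whose zero set meets $L$ in few points is forced to have a \emph{large} deficiency $s$. Geometrically this reflects the fact that $T_r(d,m)$ is attained in this subcase only by a union of $d-1$ distinct hyperplanes through a common codimension-$2$ space, a configuration excluded by the hypothesis on $G$. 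I would establish the required gap by induction on $m$: choosing a hyperplane $\Pi\supseteq L$ (these are exactly the ones described by the equivalence in Proposition \ref{Hplanes} and counted in Corollary \ref{CorHplanes}), the section $\V(G)\cap\Pi=\V(G|_\Pi)$ is a degree $d-1$ hypersurface in $\Pi\cong\PP^{m-1}$ still containing $L$, to which the inductive estimate applies, while the affine bound (Theorem \ref{HP}) and Serre's bound (Theorem \ref{Serre}) control the complementary pieces, and Serre's equality characterization is what upgrades each strict inequality into a numerical gap.

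The step I expect to be the main obstacle is precisely this last one: converting ``$G$ has no linear factor'' into the quantitative lower bound $s\ge p_{m-r-1}+1-c$ on the deficiency. A naive union bound, or averaging the degree-$(d-1)$ Serre bound over all hyperplanes through $L$, overcounts the contribution of $\V(G)$ by a factor of roughly $q$ and is too weak; the delicate point is to track the interaction between the global deficiency $s$ and the local intersection number $c=|\V(G)\cap L|$, and to dispose cleanly of the subcases in which a restriction $G|_\Pi$ acquires a linear factor (where one falls back on Lemma \ref{ded}, Lemma \ref{complete} and Theorem \ref{cardinality}) or in which the restricted system fails to remain linearly independent.
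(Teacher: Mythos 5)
Your reduction is sound as far as it goes: the decomposition $\V(F_1,\dots,F_r)=\V(G)\cup L$, the strict form of Serre's bound for $G$ (via the equality characterization in Theorem \ref{Serre}, which forces a product of $d-1$ distinct linear factors), the cases $r=m+1$ and $L\subseteq\V(G)$, and the equivalence of \eqref{StrictIneq} with $s+c\ge p_{m-r-1}+1$ are all correct. But the lemma is not proved: its entire content lies in the remaining case $2\le r\le m$ with $c$ small, and there you only sketch a plan and then explicitly concede that its main step is an obstacle you cannot carry out. Moreover, the sketched route --- inducting over hyperplanes $\Pi\supseteq L$ --- provably loses too much. If $\Pi\supseteq L$, the restrictions $H_1|_\Pi,\dots,H_r|_\Pi$ are linearly dependent, so the restricted system has only $r-1$ independent members, and the best available bound on $|X\cap\Pi|$ carries the floor term $\lfloor q^{(m-1)-(r-1)}\rfloor$ (for $r=2$ one has nothing better than Serre's bound $dq^{m-2}+p_{m-3}$ for a single degree-$d$ polynomial). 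Summing over the $p_{r-1}$ hyperplanes through $L$ and dividing by the incidence multiplicity $p_{r-2}$ then gives, already for $r=2$, the estimate $|X|\le (d-1)q^{m-1}+p_{m-2}+(d-1)q^{m-2}$, which overshoots the target $(d-1)q^{m-1}+p_{m-2}+q^{m-2}$ as soon as $d\ge 3$ --- and $d\ge 3$ always holds here, since $\deg G=d-1\ge 2$. This is exactly the factor-of-$q$ loss you anticipated, and no concrete mechanism is offered to recover it. (A small additional slip: $\V(G|_\Pi)$ contains $\V(G)\cap L$, not $L$.)

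The paper closes this gap by averaging over a different family of hyperplanes. Assuming $Y=\V(G)\not\subseteq L$ (the other case being trivial), fix $Q\in Y\setminus L$ and double count the incidence set $\mathscr{X}=\{(\Pi,P): Q\in\Pi,\ P\in\Pi\cap X,\ P\ne Q\}$ over \emph{all} $p_{m-1}$ hyperplanes through $Q$, so that $|\mathscr{X}|=(|X|-1)p_{m-2}$. By Proposition \ref{Hplanes} and Corollary \ref{CorHplanes}, only $p_{r-2}$ of these hyperplanes satisfy $\codim_\Pi(L\cap\Pi)=r-1$, i.e., have restricted systems that drop to $r-1$ independent polynomials; for all the remaining hyperplanes the restrictions stay $r$-independent, and the induction hypothesis on $m$ (or Lemma \ref{ded} when the restricted gcd acquires a linear factor, or Theorem \ref{Serre} when $r=m$ and $\Pi\cap L=\emptyset$) gives the stronger bound with $\lfloor q^{(m-1)-r}\rfloor$. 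Because the weak bound is charged to only $p_{r-2}$ hyperplanes instead of all of them, the count yields $|X|\le (d-1)q^{m-1}+p_{m-2}+q^{m-r}+A/p_{m-2}$ with $A=(d-1-q)q^{m-2}<0$ (using $d\le q$), which is \eqref{StrictIneq}. This weighting --- few bad hyperplanes, many good ones --- is precisely the idea your proposal is missing.
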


\begin{proof} 
We use induction on $m$ to show that \eqref{StrictIneq} holds for every positive integer $r\le m+1$ and any $F_1, \dots , F_r\in S_d$ satisfying the hypothesis of the lemma. 
In the remainder of the proof, we will use the following notation. 
With $F_1, \dots, F_r$ and $G$ as in the statement of the lemma, we let $H_1, \dots, H_r$ be 
linear homogeneous polynomials in $S$ such that $F_i=GH_i$ for $i=1, \dots , r$. 
Write $X:= \V(F_1, \dots, F_r)$, $Y:= \V(G)$ and $L = \V(H_1, \dots, H_r)$. Clearly $X = Y\cup L$. Note that since $F_1, \dots, F_r$ be linearly independent, so are $H_1, \dots, H_r$, and therefore $|L|=p_{m-r}$.

First, suppose $m=1$.  By our assumption $G(x_0,x_1)$ 
has no linear factor and hence $Y$ is empty and so $X= L$. 
It is now easy to see that  \eqref{StrictIneq} holds in this case. 
%

Next suppose $m>1$ and the result holds for smaller values of $m$. Fix a positive integer $r\le m+1$ and any $F_1, \dots , F_r\in S_d$ as in the statement of the lemma. Let $G, H_i, X, Y$ and $L$ be as above. Note that the case $r=1$ can not arise since $\deg G = d-1 < \deg F_1$.  Also note that if $r=m+1$, then $L$ is empty and $X=Y$; hence Theorem \ref{Serre} implies  \eqref{StrictIneq} in this case since $G$ has degree $d-1$ and has no linear factor. Thus we will assume that $2\le r \le m$. 
Observe that if 
$Y\subseteq L$, then 
$$
|X| = |L| = p_{m-r} <  p_{m-1} +  \lfloor q^{m-r} \rfloor \le (d - 1) q^{m-1} + p_{m-2}+  \lfloor q^{m-r} \rfloor,
$$
as desired. Thus we now assume that $Y\not\subseteq L$. Fix some $Q \in Y \setminus L$. Consider
$$
\mathscr{X} := 
\left\{(\Pi, P) \in \hpm \times \mathbb{P}^m : Q \in \Pi, \; P \in \Pi \cap X \text{ and } P \neq Q\right\}
$$
and let us count it in two ways. First, for a fixed $P\in X\setminus\{Q\}$, there are exactly $p_{m-2}$ hyperplanes $\Pi \in \hpm$ passing through the two distinct points $P$ and $Q$. Hence 
\begin{equation}
\label{eqX1}
|\mathscr{X}| = (|X| - 1)p_{m-2}.
\end{equation}
On the other hand, there are a total of $p_{m-1}$ hyperplanes  $\Pi \in \hpm$ that contain $Q$ and for each of them, a point $P\in \PP^m$ is such that $(\Pi,P)\in \mathscr{X}$ if and only if $P\in  (\Pi \cap X) \setminus\{Q\}$. Moreover, by Proposition \ref{Hplanes}, for any $\Pi \in \hpm$, the codimension of $L_{\Pi}:=L\cap \Pi$ in $\Pi$ is either $r-1$ or $r$.  Thus 
\begin{equation}
\label{eqX2}
|\mathscr{X}| =  \sum_{\substack{ \Pi \in \hpm \\ Q\in \Pi, \; \codim_{\Pi} L_{\Pi} = r-1}} (|\Pi \cap X| - 1)  \; + \sum_{\substack{\Pi \in \hpm \\  Q\in \Pi, \; \codim_{\Pi} L_{\Pi} = r}} (|\Pi \cap X| - 1).
\end{equation}
Denote the first and second sums on the right hand side of \eqref{eqX2} by $\Sigma_{r-1}$ and $\Sigma_r$ respectively. 
Since  $2 \le r \le m$,  using Corollary \ref{CorHplanes}, Proposition \ref{Hplanes} and the induction hypothesis together with Lemma \ref{ded} (applied to the restrictions of $F_1, \dots , F_r$ to $\Pi \simeq \PP^{m-1}$), we see that 
\begin{equation}
\label{eqS1}
\Sigma_{r-1} \le p_{r-2} \left( (d-1)q^{m-2} + p_{m-3} + q^{(m-1)-(r-1)}  - 1 \right). 
\end{equation}
Likewise, if $2\le r < m$, then using Corollary \ref{CorHplanes}, Proposition \ref{Hplanes} and the induction hypothesis together with Lemma \ref{ded}, we see that 
\begin{equation}
\label{eqS2}
\Sigma_{r} \le \left( p_{m-1} - p_{r-2} \right) \left( (d-1)q^{m-2} + p_{m-3} + q^{(m-1)-r}  - 1 \right). 
\end{equation}
In case $r=m$, for any $\Pi \in \hpm$ such that $\codim_{\Pi} L_{\Pi} = r$, the intersection $\Pi \cap L$ is empty and hence $\Pi\cap X = \Pi \cap Y$; consequently,  Theorem \ref{Serre} can be applied to deduce that 
$|\Pi\cap X| \le (d-1)q^{m-2} + p_{m-3}$. Thus \eqref{eqS2} holds in this case as well. Now adding the 
upper bounds in \eqref{eqS1} and \eqref{eqS2}, we see after some simplification 
that 
$$
|\mathscr{X}| \le p_{m-1} (d-1)q^{m-2} + p_{m-1}p_{m-3} +p_{r-2}(q^{m-r} - q^{m-r-1}) + p_{m-1}q^{m-r -1} - p_{m-1}.
$$
Putting $p_{m-1} = qp_{m-2} + 1$ in the first, second, and fourth summands of 
the right hand side of the above inequality, and then comparing with  \eqref{eqX1}, we obtain 
$$
|X| \le  1 +  (d-1)q^{m-1} + qp_{m-3}   + q^{m-r} + \frac{A}{p_{m-2}} = (d-1)q^{m-1} + p_{m-2}   + q^{m-r} + \frac{A}{p_{m-2}} ,
$$
where we have temporarily put
$$
A:= (d - 1)q^{m-2} + p_{m-3} +p_{r-2}(q^{m-r} - q^{m-r-1}) +q^{m-r -1} - p_{m-1} .
$$
To complete the proof, it suffices to show that $A<0$. To this end, observe that 
\begin{equation*}
\begin{split}
A &= (d-1)q^{m-2}  + q^{m-r-1}(q^{r-1} - 1) +q^{m-r-1} + p_{m-3} - p_{m-1}  \\
&= (d-1)q^{m-2} + q^{m-2} - q^{m - r- 1} + q^{m-r-1}  - q^{m-2} - q^{m-1}\\
&= (d-1 - q)q^{m-2} . 
\end{split}
\end{equation*}
Since $d\le q$, we see that $A < 0$.
\end{proof}

We now deal with the second subcase of Case 3 in the general strategy outlined at the end of Section \ref{comb}, but with 
the cases covered by Lemmas \ref{ded} 
and \ref{Case3-1} excluded. 

\begin{lemma}
\label{Case3-2}
Assume that $1<d < q$ and $1\le r \le m+1$. Let 
$F_1, \dots, F_r$ be linearly independent 
polynomials in $S_d$ and let $G$ be a GCD of $F_1, \dots , F_r$. 
Suppose $\deg \gcd (F_i, F_j) = d-1$ for all $i,j=1, \dots , r$ with $i\ne j$ and $\deg G < d-1$. Also suppose  
$F_1, \dots , F_r$ have no common linear factor. Then
\begin{equation}
\label{StrictIneq2}
| \V(F_1, \dots, F_r)| < (d - 1) q^{m-1} + p_{m-2}+  \lfloor q^{m-r} \rfloor .
\end{equation}
\end{lemma}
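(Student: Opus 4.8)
The plan is to avoid the induction on $m$ used in Lemma~\ref{Case3-1} and instead combine the structure theorem for coprime close families (Theorem~\ref{structure}) with Serre's bound (Theorem~\ref{Serre}) and an elementary point count. First I would put the hypotheses into an explicit factored form. Writing $F_i = G\,G_i$, the family $\{G_1,\dots,G_r\}$ is coprime close in $S_k$ with $k = d - \deg G$. Since $\deg G < d-1$ we have $k\ge 2$, so Theorem~\ref{structure} forces $k = r-1$; hence $\deg G = d-r+1$ (in particular $r\ge 3$) and there exist pairwise non-proportional $H_1,\dots,H_r\in S_1$ with $G_i = H_1\cdots\check{H}_i\cdots H_r$. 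The hypothesis that $F_1,\dots,F_r$ share no linear factor is exactly the statement that $G$ has no linear factor, so either $\deg G = 0$ or $\deg G\ge 2$.

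The second step is a clean set-theoretic decomposition. Set $Y := \V(G)$ and $W := \bigcup_{1\le i<j\le r}\V(H_i,H_j)$. The key observation is that $X := \V(F_1,\dots,F_r) = Y\cup W$: a point $P$ with $G(P)\ne 0$ satisfies $F_i(P)=0$ for all $i$ if and only if $\prod_{j\ne i}H_j(P)=0$ for every $i$, which happens precisely when at least two of $H_1(P),\dots,H_r(P)$ vanish. Because the $H_i$ are pairwise non-proportional, each $\V(H_i,H_j)$ is a linear subvariety of $\PP^m$ of dimension $m-2$, so $|W|\le \binom{r}{2}p_{m-2}$.

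The third step bounds $|Y|$ and assembles the estimate $|X|\le |Y|+|W|$, splitting on $\deg G$. When $\deg G\ge 2$, Theorem~\ref{Serre} gives $|Y|\le (d-r+1)q^{m-1}+p_{m-2}$, and then \eqref{StrictIneq2} reduces to the elementary inequality $\binom{r}{2}p_{m-2} < (r-2)q^{m-1}$. When $\deg G = 0$ one has $Y=\emptyset$ and $d=r-1$, so \eqref{StrictIneq2} reduces instead to $\bigl(\binom{r}{2}-1\bigr)p_{m-2} < (r-2)q^{m-1}$. Both follow from the strict bound $p_{m-2} < q^{m-1}/(q-1)$ combined with the available constraints on $r$: in the first case $r\le d-1\le q-2$, so $q-1\ge r+1$; in the second case $r=d+1\le q$. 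A short computation then checks $\binom{r}{2}/(q-1)\le r-2$ (respectively $(\binom{r}{2}-1)/(q-1)\le r-2$), valid for all $r\ge 3$.

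The main obstacle is precisely this final elementary inequality in the regime of small $q$ with $r=3$ (notably $q=3$), where it is nearly tight: there one must \emph{not} discard the gap coming from $p_{m-2} < q^{m-1}/(q-1)$, and one must use $|Y|=0$ exactly when $G$ is constant rather than the loose Serre value $p_{m-2}$ — which is why the case split on $\deg G$ is genuinely needed. Conceptually, though, the bound is far from tight in leading order, since $Y$ contributes only $(d-r+1)q^{m-1}$ against the target $(d-1)q^{m-1}$ while $W$ is of lower order $O(q^{m-2})$; this explains, in accordance with the remark opening Section~\ref{high}, why $T_r(d,m)$ fails to be attained in this case.
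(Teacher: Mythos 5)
Your proof is correct, but it takes a genuinely different route from the paper's. Both arguments open identically, using Theorem~\ref{structure} to get $r=k+1\ge 3$ and $F_i = G\,H_1\cdots\check{H}_i\cdots H_r$ with pairwise non-proportional $H_i$; after that they diverge. The paper normalizes $H_1=x_0$ (which divides $F_2,\dots,F_r$ but not $F_1$) and splits $X=\V(F_1,\dots,F_r)$ along this hyperplane: the part inside $\V(x_0)$ is bounded by Serre's theorem applied to the degree-$d$ form $F_1(0,x_1,\dots,x_m)$, while the affine complement is bounded by the Heijnen--Pellikaan theorem (Theorem~\ref{HP}) applied to the $r-1$ dehomogenizations $f_2,\dots,f_r$ of degree $\le d-1$. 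You instead prove the exact set-theoretic identity $X=\V(G)\cup\bigcup_{i<j}\V(H_i,H_j)$ (your verification is correct: off $\V(G)$, all $F_i$ vanish at $P$ precisely when at least two of the $H_i$ vanish at $P$), and then need only Serre's bound for $\V(G)$ --- or $\V(G)=\emptyset$ when $\deg G=0$ --- plus the trivial count $|\V(H_i,H_j)|=p_{m-2}$. Your closing arithmetic also checks out: when $\deg G\ge 2$ you have $r\le d-1\le q-2$, so $\binom{r}{2}p_{m-2}<\binom{r}{2}q^{m-1}/(r+1)\le (r-2)q^{m-1}$ using $r^2-r-4\ge 0$ for $r\ge 3$; when $\deg G=0$ you have $r=d+1\le q$, so $\bigl(\tbinom{r}{2}-1\bigr)p_{m-2}<\bigl(\tbinom{r}{2}-1\bigr)q^{m-1}/(r-1)\le (r-2)q^{m-1}$ using $(r-2)(r-3)\ge 0$, with the strictness of $p_{m-2}<q^{m-1}/(q-1)$ carrying the strict inequality, and your observation that the case split is forced (the cruder estimate $|\V(G)|\le p_{m-2}$ would fail at $q=3$, $r=3$) is accurate. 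What your route buys: it removes any appeal to Theorem~\ref{HP} from this lemma, so the high-correlation case without a common linear factor rests only on Theorem~\ref{Serre} and Theorem~\ref{structure}, and it describes the zero locus exactly rather than through two unrelated bounds. What the paper's route buys: as recorded in the Remark following Lemma~\ref{Case3-2}, its estimates survive up to $d\le q$ and still give the non-strict inequality there (strictness failing only for $d=q$, $m=2$), whereas your union bound $|W|\le\binom{r}{2}p_{m-2}$ is too crude at $d=q$ for small $q$ (e.g.\ $q=3$, $r=4$, $m=3$ yields the unusable $24\not<22$), so your argument is genuinely confined to the stated hypothesis $d<q$.
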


\begin{proof} 
Let $G_1, \dots , G_r\in S$ be such that $F_i=GG_i$ for $i=1, \dots , r$. Note that  $\{G_1, \dots , G_r\}$  
is a coprime close family of linearly independent homogeneous polynomials in $S$ of degree $k:= d- \deg G$. Also note that $k > 1$ by the hypothesis on $\deg G$. Thus by Theorem \ref{structure}, 
$r = k+1 \ge 3$ (so that $m\ge 2$) and 
there exist $H_1, \dots , H_r\in S_1$, no two $H_i$'s differing  by a nonzero constant, such that 
$$
G_i= H_1\cdots \check{H}_i \cdots H_r \quad \text{and} \quad  F_i = G  H_1\cdots \check{H}_i \cdots H_r
\quad \text{ for } i=1, \dots , r,
$$ 
where $\check{H}_i$ indicates that the factor $H_i$ is omitted. 
Note that $H_1 \mid F_i$ for $2 \le i \le r$, whereas $H_1 \nmid F_1$ since $F_1, \dots , F_r$ have no common linear factor.  By a linear change of coordinates, we may assume that $H_1 = x_0$. Now let 
$$
X := \V(F_1, \dots, F_r), \quad X_1 : = X \cap\V(x_0) \quad \text{and}  \quad X_2 := X \cap \left( \PP^m\setminus \V(x_0) \right) . 
$$ 
Clearly, $|X| = |X_1| + |X_2|$. Moreover, $X_1$ corresponds to a projective hypersurface in $\PP^{m-1}$ given by the vanishing of the nonzero homogeneous polynomial $F(0, x_1, \dots x_{m})$ of degree $d$.
Hence by Theorem \ref{Serre}, 
$$
|X_1| \le dq^{m-2} + p_{m-3}.
$$
On the other hand, $X_2$ is in bijection with the affine variety in $\AA^m$ defined by the vanishing of $f_1, f_2, \dots , f_r$, where $f_i(x_1, \dots , x_m):= F_i(1, x_1, \dots , x_m)$ for $i=1, \dots, r$. 
In particular, $X_2$ is a subset of the set of common zeros in $\AA^m(\Fq)$ of the $r-1$ polynomials 
$f_2, \dots , f_r$. Since each of $f_2, \dots , f_r$ has degree $\le d-1$, it follows from Theorem \ref{HP} that 
$$
|X_2| \le (d-2)q^{m-1} + q^{m-r+1}.
$$ 
Consequently, 
$$
|X| \le dq^{m-2} + p_{m-3} + (d-2)q^{m-1} + q^{m-r+1}.
$$
To complete the proof, it suffices to show that 
$$
(d-1)q^{m-1} + p_{m-2} + \lfloor q^{m-r} \rfloor >  dq^{m-2} + p_{m-3} + (d-2)q^{m-1} + q^{m-r+1}.
$$
To this end,  let us note that  for $1 \le r \le m$, the difference can be written as 
\begin{equation*}
\begin{split}
&  \big( (d-1)q^{m-1} + p_{m-2} + q^{m-r}  \big) - \big(  dq^{m-2} + p_{m-3} + (d-2)q^{m-1} + q^{m-r+1} \big) \\
& = q^{m-1} + q^{m-2}  - d q^{m-2} -q^{m-r} (q - 1)\\
& = q^{m-r} [(q - d + 1) q^{r-2} - (q-1)] \\
& \ge q^{m-r}(q^{r-2} - q +1)>0, 
\end{split}
\end{equation*}
where the last inequality holds since $r \ge 3$.
On the other hand for $r = m +1$, the difference is 
\begin{equation*}
\begin{split}
& \big( (d-1)q^{m-1}  + p_{m-2} \big) - \big( dq^{m-2} + p_{m-3} + (d-2)q^{m-1} + 1 \big)\\
& = q^{m-1} - (d-1)q^{m-2} -1 \\
& = q^{m-2} (q - d + 1) - 1 > 0, 
\end{split}
\end{equation*}
where the last inequality follows from the fact that $d<q$ and $m \ge 2$. 
\end{proof}

\begin{remark}
The above proof also shows that with the hypothesis on $r$ and $F_1, \dots , F_r$ as in Lemma \ref{Case3-2}, 
the weaker inequality
$$
| \V(F_1, \dots, F_r)| \le (d - 1) q^{m-1} + p_{m-2}+  \lfloor q^{m-r} \rfloor 
$$
holds under a somewhat more general assumption that $1< d \le q$. 
In fact, the only case where the proof does not yield the strict inequality \eqref{StrictIneq2} is 
$d=q$ and $m=2$. 
\end{remark}

\section{Maximal Families of Polynomials}
\label{MaxFam}
The results of the previous sections yield an upper bound on the number of common solutions of a system of $r$ linearly independent homogeneous polynomials in $\Fq[x_0, x_1, \dots , x_m]_d$ when $r\le m+1$ and $d < q-1$. The next lemma shows that this bound can be attained. 

\begin{lemma}
\label{lub}
Assume that $1\le d \le q+1$ and $1\le r \le m+1$. Then there exist~$r$ linearly independent homogeneous polynomials $F^*_1,  \dots, F^*_r \in S$  of degree $d$ such that 
$$
\left| \V(F^*_1,  \dots, F^*_r)\right| = (d-1)q^{m-1} + p_{m-2} + \lfloor q^{m-r}\rfloor.
$$
\end{lemma}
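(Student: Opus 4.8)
The plan is to exhibit an extremal system explicitly, rather than to argue abstractly: since only existence is asserted, it suffices to produce one family attaining the stated number (the matching upper bound being the substance of the main theorem). I would build it as a pencil of hyperplanes through a common codimension-$2$ subspace, multiplied out, together with a carefully chosen linear section. Assume first $d\ge 2$ (the case $d=1$ is the linear one already treated in \S\ref{subsec1}, where $r$ independent linear forms give $p_{m-r}=T_r(1,m)$). Because $d-1\le q$, I may pick $d-1$ distinct scalars $c_1,\dots,c_{d-1}\in\Fq$ and set $G:=\prod_{s=1}^{d-1}(x_0-c_sx_1)\in S_{d-1}$. The hyperplanes $\Pi_s:=\V(x_0-c_sx_1)$ all contain $W:=\V(x_0,x_1)$, and any two of them meet exactly in $W$ (from $x_0=c_sx_1=c_{s'}x_1$ one gets $x_1=x_0=0$); thus $\V(G)$ is the degree-$(d-1)$ extremal configuration of Theorem~\ref{Serre}, and a disjointness count gives
$$
|\V(G)|=(d-1)\,(p_{m-1}-p_{m-2})+p_{m-2}=(d-1)q^{m-1}+p_{m-2}.
$$

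Next I would attach a linear section. For $1\le r\le m$ put $F^*_i:=G\,x_i$ $(i=1,\dots,r)$, and for $r=m+1$ put $F^*_i:=G\,x_{i-1}$ $(i=1,\dots,m+1)$. In either case $F^*_1,\dots,F^*_r\in S_d$ are linearly independent, since multiplication by the fixed nonzero $G$ is injective and the linear factors $x_1,\dots,x_r$ (respectively $x_0,\dots,x_m$) are linearly independent. Writing $\ell_i$ for the linear factor of $F^*_i$ and $L:=\V(\ell_1,\dots,\ell_r)$, the set identity $\bigcap_i(\V(G)\cup\V(\ell_i))=\V(G)\cup\bigcap_i\V(\ell_i)$ yields $\V(F^*_1,\dots,F^*_r)=\V(G)\cup L$, so that
$$
|\V(F^*_1,\dots,F^*_r)|=|\V(G)|+|L|-|\V(G)\cap L|.
$$

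The crux is to control $\V(G)\cap L$, and this is exactly where the choice of $L$ is delicate. For $r=m+1$ one has $L=\V(x_0,\dots,x_m)=\varnothing$, so the count is $|\V(G)|=(d-1)q^{m-1}+p_{m-2}$, which is the claimed value since $\lfloor q^{-1}\rfloor=0$. For $r\le m$ the key observation is that $L=\V(x_1,\dots,x_r)\subseteq\V(x_1)$, so on $L$ each equation $x_0=c_sx_1$ collapses to $x_0=0$; hence $\V(G)\cap L=L\cap\V(x_0)=\V(x_0,x_1,\dots,x_r)$. Since $x_0\notin\langle x_1,\dots,x_r\rangle$, this is a proper hyperplane of $L$, whence $|L|=p_{m-r}$ and $|\V(G)\cap L|=p_{m-r-1}$. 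Using $p_{m-r}-p_{m-r-1}=q^{m-r}$ I then obtain $|\V(F^*_1,\dots,F^*_r)|=(d-1)q^{m-1}+p_{m-2}+q^{m-r}$, as required.

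I expect this collapsing step to be the main obstacle to get right. A generic linear space $L$ of dimension $m-r$ would meet the $d-1$ hyperplanes $\Pi_s$ in $d-1$ \emph{distinct} hyperplanes of $L$, making $\V(G)\cap L$ larger than a single hyperplane and forcing $|\V(G)\cup L|$ strictly below the target; one checks that such a generic choice yields only $q^{m-r}+(2-d)q^{m-r-1}$ new points. Demanding $L\subseteq\V(x_1)$ is precisely what makes all the sections $\Pi_s\cap L$ coincide with $\V(x_0)\cap L$, recovering the full $q^{m-r}$ extra points. (As a remark, for $d\le q$ one could instead homogenize an extremal Heijnen--Pellikaan affine system via Theorem~\ref{HP}, setting $F^*_i=x_0G_i$ with $G_i$ the degree-$(d-1)$ homogenization; but that route misses $d=q+1$, whereas the direct construction above is uniform across $2\le d\le q+1$.)
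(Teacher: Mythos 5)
Your proposal is correct and is essentially the paper's own proof: the paper takes $F^*_i = x_{i-1}G^*$ with $G^* = (x_m-\lambda_1 x_0)\cdots(x_m-\lambda_{d-1}x_0)$, computes $|\V(G^*)| = (d-1)q^{m-1}+p_{m-2}$ by the same two-class point count, and applies the same inclusion--exclusion using the collapse $\V(G^*)\cap\V(x_0,\dots,x_{r-1})=\V(x_0,\dots,x_{r-1},x_m)$, so your argument differs only by a relabeling of variables and by treating $r=m+1$ as a separate case, which the paper absorbs uniformly via $p_{m-r}=p_{m-r-1}=0$.
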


\begin{proof}
Since $d\le q+1$, we can choose $d-1$ distinct elements, say $\lambda_1, \dots, \lambda_{d-1}$, in $\Fq$. 
Consider the homogeneous polynomials $G^*$ and $F^*_1,  \dots, F^*_r $ 
defined by 
$$
G^* := (x_{m} - \lambda_1 x_0) \dots (x_m - \lambda_{d-1} x_0) \quad \text{and} \quad 
F^*_i := x_{i-1}G^* \quad \text{for } i = 1,  \dots, r.
$$ 
It is clear that $F^*_1,  \dots, F^*_r$ 
are linearly independent elements of $S_d$. 
Now let
$$
X = \V(F^*_1,  \dots, F^*_r), \quad Y = \V(G^*)  \quad \text{and} \quad  X':= \V(x_0, x_1, \dots, x_{r-1}). 
$$
Note that $X=Y \cup X'$ and so  $|X|=  |Y| + |X'| - |Y \cap X'|$. The points of $Y$ have homogeneous coordinates $(a_0:a_1:\dots : a_m)$ that fall into two disjoint classes: 
\begin{enumerate}
\item[(i)] $a_0 =1$, $a_m = \lambda_j$ for some $j \in \{1, \dots , d-1\}$ and $a_1, \dots , a_{m-1}\in \Fq$ arbitrary; 
\item[(ii)] $a_0=0=a_m$ and $(a_1: \dots : a_{m-1})\in \PP^{m-2}(\Fq)$ arbitrary. 
\end{enumerate}
Consequently, $|Y| = (d-1)q^{m-1} + p_{m-2}$. 
Also, $Y\cap X'=\V(x_0, x_1, \dots, x_{r-1}, x_m).$ It follows that $|X'| = p_{m-r}$ and $|Y\cap X'| = p_{m-r-1}$. Thus
$$
|X| =  (d-1)q^{m-1} + p_{m-2} + p_{m-r} - p_{m-r-1} = (d-1)q^{m-1} + p_{m-2} + \lfloor q^{m-r}\rfloor, 
$$
where we have used the fact that if $r=m+1$, then $p_{m-r}  = p_{m-r-1} = 0$. 
\end{proof}

\begin{remark}
\label{qplusone}
Suppose $d=q+1$ and $F^*_1,  \dots, F^*_r$ are the polynomials in $S_d$ as 
constructed in the above proof. 
Then $|\V(F_1^*)| = p_m$ and $F_1^*$  is precisely the polynomial $x_m^qx_0 - x_0^q x_m$. 
On the other hand, if  $2\le  r \le m+1$, then 
 $\left| \V(F^*_1,  \dots, F^*_r)\right| < p_m$. 
However, for any $r\le m+1$ and more generally, for any $r\le {{m+1}\choose{2}}$, it is easy to construct a family $H^*_1,  \dots, H^*_r$ of linearly independent polynomials in $S_{q+1}$ such that 
$|\V(H^*_1,  \dots, H^*_r)| = p_m$. Indeed, we can simply choose any $r$ distinct polynomials among the 
${{m+1}\choose{2}}$ \emph{Fermat polynomials}\footnote{The nomenclature \emph{Fermat polynomial} is motivated by Fermat's little theorem, which says that 
if $p$ is prime, then the polynomial $x^p - x$ vanishes at every point of $\FF_p =\Z/p\Z$; more generally, 
the polynomial $x^q-x$ vanishes at every point of $\Fq$.} $x_i^qx_j - x_j^q x_i$ for $0\le i< j \le m$. Showing linear independence is easy (e.g., if a linear combination equals zero, then setting all the variables except $x_i$ and $x_j$ to be zero, one finds that the coefficient of $x_i^qx_j - x_j^q x_i$ is necessarily zero) and each of these polynomials vanishes at every point of $\PP^m(\Fq)$. In fact, these 
${{m+1}\choose{2}}$  Fermat polynomials generate the vanishing ideal ${\mathscr{I}}$ of $\PP^m(\Fq)$; see, e.g., \cite{MR}. Further, as P. Beelen pointed out to us, 
if $d\ge q+1$, then there is $r_d \ge {{m+1}\choose{2}}$ and a family $H^*_1,  \dots, H^*_{r_d}$ of linearly independent polynomials in ${\mathscr{I}}_d := {\mathscr{I}} \cap S_d$ such that 
$|\V(H^*_1,  \dots, H^*_{r_d})| = p_m$. In fact, by 
\cite[Thm. 5.2]{MR}, 
$$
r_d = \dim {\mathscr{I}}_d = \sum_{j=2}^{m+1} (-1)^j {{m+1}\choose{j}} \sum_{i=0}^{j-2} {{d+(i+1)(q-1) - jq -m}\choose{d+(i+1)(q-1) - jq }}.
$$
\end{remark}

We are now ready to state and prove the main theorem of this paper. 

\begin{theorem}
\label{mainthm}
Assume that  $1\le d < q -1$ and $1\le r \le m+1$. Then the maximum number of 
zeros in $\PP^m(\Fq)$ that  a system of $r$ linearly independent homogeneous polynomials, each of degree $d$ in 
$S = \Fq[x_0,x_1, \dots , x_m]$,  
can have is given by the Tsfasman-Boguslavsky bound $T_r(d,m)$ given by \eqref{TBr} and 
more explicitly~by 
$$
T_r(d,m)= \begin{cases} p_{m-r} & \text{ if } \, d = 1 \text{ and } \, 1\le r \le m+1, \\
(d-1)q^{m-1} + p_{m-2} + \lfloor q^{m-r}\rfloor & \text{ if } \, d > 1 \text{ and } \, 1\le r \le m+1. \end{cases}
$$
Moreover if $d=1$, then the maximum 
$T_r(d,m)$ is always attained, whereas if $d>1$ and if 
 the maximum 
is attained
 by a family $\{F_1, \dots , F_r\}$ of $r$ linearly independent polynomials in $S_d$, then $F_1, \dots , F_r$ must have a common linear factor in $S$. 
\end{theorem}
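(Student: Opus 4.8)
The plan is to collect the estimates of Sections \ref{low} and \ref{high} into the single case analysis prescribed at the end of Section \ref{comb}. I would begin by recording the explicit shape of $T_r(d,m)$: the computation in Subsection \ref{subsec1} gives $T_r(1,m)=p_{m-r}$, while \eqref{expTr} gives $T_r(d,m)=(d-1)q^{m-1}+p_{m-2}+\lfloor q^{m-r}\rfloor$ for $d>1$. Two trivial cases then dispose of themselves. When $d=1$, linear algebra shows that $r$ linearly independent linear forms cut out a codimension-$r$ linear subvariety with exactly $p_{m-r}=T_r(1,m)$ points, attained e.g. by $x_0,\dots,x_{r-1}$. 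When $r=1$ and $d>1$, Theorem \ref{Serre} gives $|\V(F_1)|\le dq^{m-1}+p_{m-2}=T_1(d,m)$, with equality forcing $F_1$ to be a product of distinct linear forms, hence to have a linear factor.

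For $d>1$ and $2\le r\le m+1$ I would fix linearly independent $F_1,\dots,F_r\in S_d$, put $G=\gcd(F_1,\dots,F_r)$, write $F_i=GG_i$, and set $b_{ij}=\deg\gcd(F_i,F_j)$ as in Section \ref{comb}. If some $b_{ij}=0$, Lemma \ref{case1} applies; if some $0<b_{ij}<d-1$, Lemma \ref{case2} applies; in either situation $|\V(F_1,\dots,F_r)|<(d-1)q^{m-1}+p_{m-2}\le T_r(d,m)$. The remaining possibility is $b_{ij}=d-1$ for all $i\ne j$, in which case $\{G_1,\dots,G_r\}$ is a coprime close family in $S_{d-\deg G}$, so Theorem \ref{structure} forces $\deg G=d-1$ or $\deg G=d-r+1$. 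In both subcases I would distinguish whether $G$ has a linear factor; since $G$ is the gcd and $\gcd(G_1,\dots,G_r)=1$, this is equivalent to $F_1,\dots,F_r$ having a common linear factor. If they do, Lemma \ref{ded} gives $|\V(F_1,\dots,F_r)|\le T_r(d,m)$; if they do not, Lemma \ref{Case3-1} (when $\deg G=d-1$) or Lemma \ref{Case3-2} (when $\deg G=d-r+1$) gives the strict inequality \eqref{StrictIneq} or \eqref{StrictIneq2}, again below $T_r(d,m)$. Thus $|\V(F_1,\dots,F_r)|\le T_r(d,m)$ in every case, and Lemma \ref{lub} shows this value is attained.

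The equality characterization for $d>1$ is then a byproduct of the same bookkeeping: every branch except the one governed by Lemma \ref{ded} produced a strict inequality, so if the maximum $T_r(d,m)$ is attained, the family cannot lie in any strict branch and must have a common linear factor (for $r=1$ this was already noted via Theorem \ref{Serre}). The part demanding genuine care---and the main obstacle---is not any single estimate but the verification that the analysis is airtight: that the trichotomy on the $b_{ij}$ is exhaustive, that the two options delivered by Theorem \ref{structure} align exactly with the hypotheses of Lemmas \ref{Case3-1} and \ref{Case3-2}, that ``$G$ has a linear factor'' really coincides with ``$F_1,\dots,F_r$ have a common linear factor,'' and that the degree restrictions $d<q-1$, $d\le q$, $d<q$ imposed by the individual lemmas are all subsumed by the standing hypothesis $d<q-1$.
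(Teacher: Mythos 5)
Your proposal is correct and follows essentially the same route as the paper: the paper's proof of Theorem \ref{mainthm} likewise disposes of $d=1$ and $r=1$ via \S\;\ref{subsec1}, then for $1<d<q-1$ and $1<r\le m+1$ combines Lemmas \ref{ded}, \ref{case1}, \ref{case2}, \ref{Case3-1}, \ref{Case3-2} for the upper bound (with strictness absent a common linear factor) and Lemma \ref{lub} for attainment. Your more explicit bookkeeping of the $b_{ij}$-trichotomy, the two branches from Theorem \ref{structure}, and the equivalence of ``$G$ has a linear factor'' with ``$F_1,\dots,F_r$ have a common linear factor'' is exactly the intended content of the strategy outlined at the end of Section \ref{comb}.
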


\begin{proof}
The cases when $d=1$ or $r=1$ have 
already been discussed in \S \;\ref{subsec1}. Now suppose 
$1<d< q-1$ and $1 < r\le m+1$. Then $T_r(d,m)$ is given explicitly by \eqref{expTr}. 
If $\{F_1, \dots , F_r\}$ is an arbitrary family of $r$ linearly independent polynomials in $S_d$, then it follows from Lemmas \ref{ded}, \ref{case1}, \ref{case2}, \ref{Case3-1} and \ref{Case3-2}
that the number of common zeros in $\PP^m(\Fq)$ of $F_1, \dots , F_r$ is bounded above by 
$T_r(d,m)$ and, moreover, the bound is strict if $F_1, \dots , F_r$ do not have a common linear factor in $S$. 
Also by Lemma \ref{lub}, 
the bound $T_r(d,m)$ is attained. Thus  the theorem is proved. 
\end{proof}

Following Boguslavsky \cite{Bog}, we call a family $\{F_1, \dots , F_r\}$ of $r$ linearly independent polynomials in $S_d$ for which $|\V(F_1, \dots , F_r)| = T_r(d,m)$ 
to be a \emph{maximal $(r,m,d)$-configuration} over $\Fq$. Now Theorem \ref{mainthm} shows that for $d>1$ and $1\le r \le m+1$, a maximal $(r,m,d)$-configuration over $\Fq$ has a linear component in common so that $\V(F_1, \dots , F_r)$ contains a hyperplane. The example given by Lemma \ref{lub} has in fact a stronger property, namely, each of the $r$ polynomials is a product of $d$ distinct linear factors, and $(d-1)$ of these linear factors are common to all. It appears plausible that every maximal $(r,m,d)$-configuration satisfies such a property. We provide some evidence for this using rather sophisticated tools. For ease of reference, we first state a useful consequence proved in \cite[Cor. 6.6]{GL} of the Grothendieck-Lefschetz
Trace Formula, coupled with Deligne's Main Theorem concerning the so called
Riemann hypothesis for varieties over finite fields. We exclude the case $r=1$ since this it is covered by the result of Serre, viz., Theorem \ref{Serre}. 

\begin{proposition} 
\label{CorTrace}
Let $X$ be a projective algebraic variety defined over $\Fq$, and let 
$\bar{X} = X \otimes {\bar{\FF}}_q$ denote the corresponding variety over the algebraic closure of $\Fq$.
If $\dim \bar{X} = \delta$,  
then the limit 
$$
 \lim_{j\to \infty} 
\frac{ \left| {X(\FF_{q^j})}\right| } { q^{j\delta} } .
$$
exists and is equal to the number of irreducible
components of  $\bar{X}$ of dimension $\delta$. 
\end{proposition}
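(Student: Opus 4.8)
The statement to prove is Proposition \ref{CorTrace}, which asserts that for a projective variety $X$ over $\Fq$ with $\dim \bar{X}=\delta$, the limit $\lim_{j\to\infty} |X(\FF_{q^j})|/q^{j\delta}$ exists and equals the number of top-dimensional irreducible components of $\bar{X}$.

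The plan is to invoke the Grothendieck-Lefschetz Trace Formula together with Deligne's resolution of the Weil conjectures, exactly as cited in \cite[Cor. 6.6]{GL}. First I would express the point count via $\ell$-adic cohomology with compact support: for each $j\ge 1$, the trace formula gives
$$
\left| X(\FF_{q^j}) \right| = \sum_{i=0}^{2\delta} (-1)^i \operatorname{Tr}\!\left( \mathrm{Frob}_q^j \mid H^i_c(\bar{X}, \mathbb{Q}_\ell) \right),
$$
where $\mathrm{Frob}_q$ is the geometric Frobenius acting on the $\ell$-adic compactly supported cohomology of $\bar{X}$ (with $\ell \ne \operatorname{char}\Fq$). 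The next step is to control the sizes of the Frobenius eigenvalues using Deligne's theorem: every eigenvalue of $\mathrm{Frob}_q$ on $H^i_c(\bar{X},\mathbb{Q}_\ell)$ is an algebraic number all of whose complex absolute values are bounded by $q^{i/2}$. Hence the contribution of $H^i_c$ to $|X(\FF_{q^j})|$ is $O(q^{ji/2})$, and dividing by $q^{j\delta}$ shows that every term with $i<2\delta$ tends to $0$ as $j\to\infty$. Thus only the top degree $i=2\delta$ survives the limit.

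The heart of the argument is then the identification of the top cohomology. For the compactly supported $\ell$-adic cohomology of a variety of dimension $\delta$, the top group $H^{2\delta}_c(\bar{X},\mathbb{Q}_\ell)$ has a canonical basis indexed by the irreducible components of $\bar{X}$ of dimension exactly $\delta$, and on this space Frobenius acts as multiplication by $q^{\delta}$ (the eigenvalue is $q^{\delta}$ on each component, a consequence of the fundamental class having weight $2\delta$ and the component being defined over some finite extension, the Galois action on components merely permuting the corresponding basis vectors). Consequently
$$
\operatorname{Tr}\!\left( \mathrm{Frob}_q^j \mid H^{2\delta}_c(\bar{X},\mathbb{Q}_\ell) \right) = c\, q^{j\delta} + (\text{terms of strictly smaller growth}),
$$
where $c$ counts the top-dimensional components fixed by $\mathrm{Frob}_q^j$; since any component is fixed by a sufficiently high power of Frobenius, for large $j$ divisible by the relevant degrees one recovers the total number $c$ of top-dimensional irreducible components of $\bar{X}$. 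Dividing by $q^{j\delta}$ and passing to the limit therefore yields exactly this number.

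I expect the main obstacle to be the precise description of the Frobenius action on $H^{2\delta}_c$ when $\bar{X}$ is \emph{not} equidimensional or when its top components are permuted nontrivially by Galois: one must argue that the permutation action only contributes eigenvalues of the form $\zeta q^{\delta}$ with $\zeta$ a root of unity, so that after dividing by $q^{j\delta}$ and letting $j\to\infty$ along all integers (not merely multiples of a period) the Cesàro-type fluctuations do not disturb the limit — or, more cleanly, that the limit as stated genuinely exists because each such $\zeta$-contribution is handled by the standard fact that these roots of unity are themselves Frobenius eigenvalues whose powers stabilize. Since the proposition is quoted verbatim from \cite[Cor. 6.6]{GL}, the cleanest route is to cite that corollary directly and merely recall the two inputs (the trace formula and Deligne's weight bound), deferring the delicate eigenvalue bookkeeping to the reference.
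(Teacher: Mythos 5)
Your concluding fallback---cite \cite[Cor.~6.6]{GL} and recall its two inputs---is precisely what the paper does: Proposition~\ref{CorTrace} is stated ``for ease of reference'' with no proof of its own, the entire argument being delegated to that corollary of the Grothendieck--Lefschetz trace formula and Deligne's theorem. Your first two steps also reproduce the intended argument correctly: the trace formula writes $|X(\FF_{q^j})|$ as an alternating sum of traces of $\mathrm{Frob}_q^j$ on $H^i_c(\bar{X},\mathbb{Q}_\ell)$, and Deligne's bound $q^{i/2}$ on the eigenvalue sizes kills every term with $i<2\delta$ after division by $q^{j\delta}$.

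The gap is in the top-degree step, exactly where you predicted an obstacle, and it is not mere bookkeeping. Frobenius does not act on $H^{2\delta}_c(\bar{X},\mathbb{Q}_\ell)$ as multiplication by $q^{\delta}$; it acts as $q^{\delta}$ times the permutation matrix of its action on the $\delta$-dimensional irreducible components of $\bar{X}$. Hence
$$
\operatorname{Tr}\bigl( \mathrm{Frob}_q^j \mid H^{2\delta}_c(\bar{X},\mathbb{Q}_\ell) \bigr) \;=\; a_j\, q^{j\delta}, \qquad a_j := \#\bigl\{ \text{$\delta$-dimensional components of $\bar{X}$ fixed by } \mathrm{Frob}_q^j \bigr\},
$$
and $a_j$ is periodic in $j$ but \emph{not} constant unless every top-dimensional component is Frobenius-stable; it equals the total number of components only when $j$ is divisible by all the orbit lengths. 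So the fluctuations you hoped to argue away (via ``roots of unity whose powers stabilize''---they do not: powers of a nontrivial root of unity cycle) are genuinely present, and the limit over all $j$ can fail to exist. Concretely, take $X=\V(x_0^2+x_1^2)\subseteq \PP^2$ with $q\equiv 3 \pmod 4$: then $\bar{X}$ is a pair of conjugate lines, $|X(\FF_{q^j})|=1$ for odd $j$ and $2q^j+1$ for even $j$, so $|X(\FF_{q^j})|/q^{j}$ accumulates at both $0$ and $2$ and has no limit. Thus the statement in its literal generality cannot be proved by any bookkeeping; what your method does prove---and what the paper actually uses in Corollary~\ref{maxconfig}---is the implication: \emph{if} the limit exists (there it is computed exactly for every $j$ from the hypothesis of maximality over every extension $\FF_{q^j}$), then the integer sequence $a_j$ is eventually constant, and comparison along multiples of the period forces the limit to equal the total number of $\delta$-dimensional components of $\bar{X}$. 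Your write-up becomes correct if you add this existence hypothesis, or alternatively the hypothesis that all top-dimensional components of $\bar{X}$ are Frobenius-stable.
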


\begin{corollary}
\label{maxconfig}
Assume that  $1< d < q -1$ and $1 <  r \le m+1$. Let  $\{F_1, \dots , F_r\}$ be a maximal $(r,m,d)$-configuration over $\Fq$ as well as over every finite extension $\FF_{q^j}$ of $\Fq$. 
Then the projective variety $\V(F_1, \dots , F_r)$ is of codimension $1$ in $\PP^m$ and moreover, the corresponding projective variety over the algebraic closure of $\Fq$ has exactly $d-1$ irreducible components of codimension $1$ in $\PP^m$. 
\end{corollary}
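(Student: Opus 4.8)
The plan is to combine the main counting theorem with the asymptotic trace formula of Proposition \ref{CorTrace}. Write $X := \V(F_1, \dots, F_r)$ and let $\delta := \dim \bar X$ be the dimension of the corresponding variety over the algebraic closure. First I would observe that because $\{F_1, \dots, F_r\}$ is a maximal configuration over $\Fq$, Theorem \ref{mainthm} tells us not only that $|X(\Fq)| = T_r(d,m)$, but (since $d>1$) that $F_1, \dots, F_r$ share a common linear factor $H$; hence $\V(H) \subseteq X$, so $X$ contains a hyperplane and therefore $\delta = m-1$, i.e.\ $X$ has codimension $1$ in $\PP^m$. This settles the first assertion immediately, and crucially it fixes the value $\delta = m-1$ that feeds into the trace formula.

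The heart of the argument is to pin down the number of top-dimensional components via Proposition \ref{CorTrace}. The hypothesis is that $\{F_1, \dots, F_r\}$ is a maximal $(r,m,d)$-configuration not only over $\Fq$ but over \emph{every} finite extension $\FF_{q^j}$; therefore for each $j$ we may apply the explicit formula of Theorem \ref{mainthm} with $q$ replaced by $q^j$, giving
$$
|X(\FF_{q^j})| = (d-1)q^{j(m-1)} + p_{m-2}(q^j) + \lfloor q^{j(m-r)}\rfloor,
$$
where $p_{m-2}(q^j) = q^{j(m-2)} + \cdots + q^j + 1$ denotes the value of $p_{m-2}$ over $\FF_{q^j}$. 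Dividing by $q^{j\delta} = q^{j(m-1)}$ and letting $j \to \infty$, every term except the leading one is of strictly lower order in $q^j$ and hence tends to $0$, so the limit equals $d-1$. By Proposition \ref{CorTrace} this limit is exactly the number of irreducible components of $\bar X$ of dimension $\delta = m-1$, which is precisely the number of codimension $1$ components. This proves that $\bar X$ has exactly $d-1$ irreducible components of codimension $1$, as claimed.

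The only point requiring care, and the step I expect to be the main obstacle, is the bookkeeping on the lower-order terms: one must confirm that each of $p_{m-2}(q^j)$ and $\lfloor q^{j(m-r)}\rfloor$ is $o(q^{j(m-1)})$ as $j\to\infty$. For $p_{m-2}(q^j)$ this is clear since its top term is $q^{j(m-2)}$, which is dominated by $q^{j(m-1)}$; for the floor term one notes that $m-r \le m-2 < m-1$ when $r \ge 2$, and in the boundary case $r = m+1$ the floor term is simply $1$, so in all cases the contribution vanishes in the limit. I would also remark that the hypothesis that the configuration is maximal over every extension $\FF_{q^j}$ is exactly what licenses the use of the closed-form count of Theorem \ref{mainthm} at every level $j$, and that one should check $d < q^j - 1$ holds for all $j \ge 1$ whenever $d < q - 1$, so the hypotheses of Theorem \ref{mainthm} remain satisfied over each extension. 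With these elementary estimates in place the conclusion is immediate from Proposition \ref{CorTrace}.
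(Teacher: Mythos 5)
Your proposal is correct and takes essentially the same approach as the paper's own proof: Theorem \ref{mainthm} yields a common linear factor (hence $\dim X = m-1$), and maximality over every extension $\FF_{q^j}$ lets you substitute the explicit count $(d-1)q^{j(m-1)} + p_{m-2}(q^j) + \lfloor q^{j(m-r)}\rfloor$ into the limit of Proposition \ref{CorTrace}, which is exactly $d-1$. The only quibble is a trivial slip in your parenthetical bookkeeping: for $r = m+1$ the term $\lfloor q^{j(m-r)}\rfloor$ equals $0$ rather than $1$, which of course does not affect the limit.
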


\begin{proof}
Let $X=\V(F_1, \dots , F_r)$. 
By Theorem \ref{mainthm}, the polynomials $F_1, \dots , F_r$ have a common linear factor, and so $X$ contains a hyperplane. Also $|X(\Fq)| < p_m$. It follows that $\dim X = m-1$. Moreover, the limit as 
$j\to \infty$ of 
$ { \left| {X(\FF_{q^j})}\right| }/ { q^{j(m-1)} }$ is 
$$
 \lim_{j\to \infty} \frac{ (d-1) q^{ j (m-1) }  + q^{ j(m-2) }   + q^{j(m-3)} + \cdots + q^j + 1 + \lfloor q^{j(m-r)} \rfloor } { q^{j(m-1)} }
$$
and this is clearly equal to $d-1$. 
Thus Proposition~\ref{CorTrace} implies the desired result.
\end{proof}

To end this section, we remark that although Theorem \ref{mainthm} answers the question posed at the beginning of this paper when $d < q-1$ and $r\le m+1$, it does remain open in the remaining cases. It appears plausible that the same answer is true, more generally, when $d < q$ and $r\le m+1$, but some of the steps in our proof fail when $d=q-1$. 
It would be interesting to complete the result in the cases $d=q-1$ and $d=q$ 
as well, and with this hope, we have stated and proved some of the lemmas with a weaker assumption on $d$ (such as $d\le q$) whenever possible. Of course the more interesting case is that of $m+1 < r \le {{m+d}\choose{m}}$. As is shown in \cite{DG}, the TBC 
may not help here and a new guess may be needed. We venture to make the following guess for most (but not all) values of $r$ and $d$. 

\begin{conjecture}
\label{newconj}
Assume that $1<d <q$ and $1\le r \le { {m+d-1}\choose{m} }$. Then the maximum number of common zeros in $\PP^m(\Fq)$ that  a system of $r$ linearly independent homogeneous polynomials in $S_d$
can have is given by $H_r(d-1,m) + p_{m-1}$, where $H_r(d-1,m)$ is as in \eqref{Hrdm} except with $d$ replaced by $d-1$. Moreover, if the maximum number is attained 
by a system of $r$ linearly independent polynomials in $S_d$, then these polynomials have a common linear factor in $S$.  
\end{conjecture}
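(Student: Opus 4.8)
The plan is to separate the two directions and to reduce the entire statement to a single strict inequality in the ``no common linear factor'' case; I should say at the outset that only the lower bound and the common-factor half of the upper bound appear to be within reach of the present techniques, and that the remaining inequality for large $r$ is the genuine difficulty, which is precisely why this is posed as a conjecture. \emph{Lower bound.} Since $1<d<q$ we have $1\le d-1<q$, and $r\le\binom{m+d-1}{m}$ is exactly the range in which $H_r(d-1,m)$ is defined (this binomial being the number of monomials in $x_1,\dots,x_m$ of degree at most $d-1$), so Theorem~\ref{HP}, applied with $d$ replaced by $d-1$, produces linearly independent $g_1,\dots,g_r$ of degree $\le d-1$ with exactly $H_r(d-1,m)$ common zeros in $\AA^m(\Fq)$. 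Homogenizing each $g_i$ to degree $d-1$ gives linearly independent $G_i\in S_{d-1}$ with $g_i=G_i(1,x_1,\dots,x_m)$, and then $F_i:=x_0G_i\in S_d$ are linearly independent. Since $\V(F_i)=\V(x_0)\cup\V(G_i)$, one has $\V(F_1,\dots,F_r)=\V(x_0)\cup\V(G_1,\dots,G_r)$; the $p_{m-1}$ points of $\V(x_0)$ and the points with $x_0\neq0$, which are in bijection with the common affine zeros of the $g_i$, are disjoint, so $|\V(F_1,\dots,F_r)|=p_{m-1}+H_r(d-1,m)$, realizing the conjectured value.

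\emph{Upper bound, the common-factor case.} Suppose $F_1,\dots,F_r$ have a common linear factor, which we may take to be $x_0$. Writing $F_i=x_0G_i$ with $G_i\in S_{d-1}$ linearly independent and dehomogenizing exactly as in Lemma~\ref{ded}, one gets $|\V(F_1,\dots,F_r)|=p_{m-1}+|X'|$, where $X'\subseteq\AA^m(\Fq)$ is the common zero set of the linearly independent polynomials $g_i=G_i(1,x_1,\dots,x_m)$ of degree $\le d-1$. By Theorem~\ref{HP} we get $|X'|\le H_r(d-1,m)$, with equality only when the $g_i$ form an optimal Heijnen--Pellikaan configuration. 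Thus here $|\V(F_1,\dots,F_r)|\le H_r(d-1,m)+p_{m-1}$, and the bound is attained. This is simply the argument of Lemma~\ref{ded} carried out with the full quantity $H_r(d-1,m)$ in place of its closed form for $r\le m+1$, and it is valid throughout the conjectured range.

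\emph{The core open inequality, and the main obstacle.} Everything now reduces to the claim that if $F_1,\dots,F_r\in S_d$ are linearly independent and have \emph{no} common linear factor, then $|\V(F_1,\dots,F_r)|<H_r(d-1,m)+p_{m-1}$. Granting this, the ``moreover'' clause is immediate, since only common-factor configurations can attain the maximum, and together with the previous two paragraphs it proves the conjecture. For $r\le m+1$ this strict inequality is exactly the content of Lemmas~\ref{case1},~\ref{case2},~\ref{Case3-1} and~\ref{Case3-2}, established there by splitting on the correlation factors $b_{ij}=\deg\gcd(F_i,F_j)$ and invoking the structure theorem for coprime close families (Theorem~\ref{structure}). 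The main obstacle is that this architecture degrades once $r>m+1$. First, Theorem~\ref{structure} forces any coprime close family $\{G_i\}$ to have degree $k=r-1$, which is impossible for most large $r$, where $d-\deg G$ is far smaller than $r-1$, so the rigid description $G_i=H_1\cdots\check{H}_i\cdots H_r$ is unavailable. Second, the low-correlation bounds rely on complete-intersection estimates of size roughly $d^2q^{m-2}$ (Lemma~\ref{complete}, Theorem~\ref{cardinality}); these beat the target only because for $r\le m+1$ the target's leading term is $(d-1)q^{m-1}$, whereas for large $r$ the value $H_r(d-1,m)+p_{m-1}$ decreases toward $p_{m-1}$ and the crude bounds no longer suffice. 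Third, the inductive hyperplane slicing of Lemma~\ref{Case3-1} cannot be grounded, since slicing lowers $m$ by one but leaves $r$ essentially fixed, destroying the hypothesis $r\le m+1$. I therefore expect that resolving the core inequality will demand a genuinely new input, such as a projective analogue of the Heijnen--Pellikaan footprint argument or a classification of high-$r$ configurations carrying many rational points, rather than a refinement of the case analysis developed here.
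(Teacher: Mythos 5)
This statement is Conjecture \ref{newconj}: the paper offers no proof of it, so there is no internal proof to compare your argument against, and you correctly recognized its conjectural status. The parts you do prove are correct. Your lower-bound construction ($F_i = x_0 G_i$ with $G_i$ the degree-$(d-1)$ homogenizations of an optimal Heijnen--Pellikaan configuration) works: homogenization to a fixed degree is a linear injection, so independence is preserved, and the decomposition $\V(F_1,\dots,F_r)=\V(x_0)\cup \V(G_1,\dots,G_r)$ gives exactly $p_{m-1}+H_r(d-1,m)$ points. This is precisely the correspondence the paper itself runs in the opposite direction when observing that Conjecture \ref{newconj} implies Theorem \ref{HP}, and it generalizes Lemma \ref{lub}, which is the special case $r\le m+1$. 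Your common-linear-factor upper bound is likewise the argument of Lemma \ref{ded} with $H_r(d-1,m)$ kept in its general form rather than the closed form valid for $r\le m+1$; your observation that $\binom{m+d-1}{m}=|\Lambda(d-1,m)|$ (valid since $d-1<q$) correctly identifies why the conjectured range of $r$ is exactly the range in which Theorem \ref{HP} applies.

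The genuine gap --- which you name yourself --- is the strict inequality for families with no common linear factor when $m+1<r\le\binom{m+d-1}{m}$; that is the entire open content of the conjecture, so what you have is a correct reduction plus attainability, not a proof. Two refinements to your diagnosis of the obstacles. First, Theorem \ref{structure} does not so much become unavailable for large $r$ as it becomes decisive: if $r>d+1$ then $k=d-\deg G\le d<r-1$ forces $k=1$, so Case 3 collapses entirely into the subcase $\deg G=d-1$ (handled by Lemma \ref{ded} when $G$ has a linear factor, and by the analogue of Lemma \ref{Case3-1} otherwise); the genuine breakdowns are the two you list next, namely that the complete-intersection bound $d^2p_{m-2}$ no longer sits below the target once $H_r(d-1,m)+p_{m-1}$ drops well beneath $(d-1)q^{m-1}$, and that the hyperplane-slicing induction of Lemma \ref{Case3-1} destroys the relation $r\le m+1$. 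Second, your claim that for $r\le m+1$ the core inequality ``is exactly the content'' of Lemmas \ref{case1}, \ref{case2}, \ref{Case3-1} and \ref{Case3-2} requires $d<q-1$ (Lemmas \ref{case1} and \ref{case2} assume this), whereas the conjecture permits $d=q-1$; as the paper concedes, its proof fails there, so the conjecture is open even for $r\le m+1$ at $d=q-1$, and your partition of the problem into ``known for $r\le m+1$'' versus ``open beyond'' should be adjusted accordingly.
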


It may be worthwhile to note that the validity of the above conjecture implies Theorem \ref{mainthm} with, in fact, a slightly weaker hypothesis on $d$ (namely, $d < q$ rather than $d< q-1$); indeed, if $r\le m+1$, then 
$$
H_r(d-1,m) + p_{m-1} = (d-2)q^{m-1} + \lfloor q^{m-r}\rfloor + p_{m-1}  = (d-1)q^{m-1} + p_{m-2} + \lfloor q^{m-r}\rfloor .
$$
Moreover, Conjecture \ref{newconj} also implies Theorem \ref{HP} of Heijnen and Pellikaan \cite{HP} when $d < q-1$. To see this, 
suppose $f_1, \dots , f_r$ are linearly independent polynomials in $\Fq[x_1, \dots , x_m]$ of degree $\le d$, where $d < q-1$. Homogenize $f_1, \dots , f_r$ using the extra variable $x_0$  to obtain $r$ linearly independent polynomials, say $F_1, \dots , F_r$, in $S_d$. Let $\widetilde{F}_i := x_0F_i$ for $i=1, \dots , r$. Using Conjecture \ref{newconj}  applied to be $\widetilde{F}_1, \dots, \widetilde{F}_r$ in $S_{d+1}$, we see that $| \V(\widetilde{F}_1, \dots, \widetilde{F}_r)| \le H_r(d,m) + p_{m-1}$. On the other hand, intersecting $\V(\widetilde{F}_1, \dots, \widetilde{F}_r)$ with the hyperplane $\V(x_0)$ and its complement, we find that 
$
| \V(\widetilde{F}_1, \dots, \widetilde{F}_r)| = p_{m-1} + \left| Z(f_1, \dots , f_r)\right|$, 
where $Z(f_1, \dots , f_r)$ denotes the set of common zeros of $f_1, \dots , f_r$ in $\AA^m(\Fq)$. 
It follows that $|Z(f_1, \dots , f_r)| \le H_r(d,m)$. In a similar manner, the last assertion in Conjecture \ref{newconj} implies, using a linear change of coordinates, 
that the upper bound $H_r(d,m)$ is attained. 

In fact, a similar argument as in the above paragraph can be used to derive Theorem \ref{HP} in the case $r\le m+1$ from Lemmas \ref{ded} and \ref{lub}. But this is not so interesting since our proof of 
Lemma \ref{ded} uses Theorem \ref{HP}. It would, however, be interesting if a proof that 
Conjecture \ref{newconj}  holds in the affirmative can be obtained without using Theorem \ref{HP}. This is currently known in the case $d=2$ as a consequence (see \cite[Cor. 3.2]{DG}) of a result  of Zanella \cite[Thm. 3.4]{Z} for linear sections of quadratic Veronese varieties over finite fields. 

\begin{remark}
As outlined in \cite[\S 4.1]{DG}, results such as Theorem \ref{mainthm} can be used to explicitly determine several of the generalized Hamming weights of projective Reed-Muller codes $\PRM_q(d,m)$. Using further inputs from coding theory and a result of S{\o}rensen \cite{So}, one can also deduce information about some of the terminal higher weights of $\PRM_q(d,m)$. These can, in turn, be used to answer the question posed at the beginning of this paper for ``large'' values of $r$. We refer to \cite[\S 4]{DG2} for more on this. 
It appears noteworthy that by taking $r= \binom{m+d}{d}-2$, one can deduce that   
if $1<d< q-1$, then 
the Veronese variety $\Vmd$ does not contain a line. 
\end{remark}

\section*{Acknowledgments}
We would like to thank Sartaj ul Hasan for some initial discussions with the second named author on the question considered in this article. These discussions led to a somewhat simpler proof of Boguslavsky's theorem. We are also grateful to Peter Beelen for his interest in this work and some helpful comments.

\end{document}